\documentclass[11pt]{article}
\usepackage[T1]{fontenc}
\usepackage{lmodern,amsmath,amssymb,amsthm,mathtools,booktabs}
\usepackage[margin=1in]{geometry}
\usepackage[hidelinks]{hyperref}
\ifdefined\pdfinfoomitdate\pdfinfoomitdate=1\fi
\ifdefined\pdfsuppressptexinfo\pdfsuppressptexinfo=15\fi
\ifdefined\pdftrailerid\pdftrailerid{}\fi
\hypersetup{pdftitle={Sharp FRS and singularity thresholds for off-diagonal Gram maps},
 pdfauthor={Ying Xie},pdfsubject={Algebraic geometry and commutative algebra},
 pdfkeywords={orthogonal frames, Gram maps, FRS morphisms, convolutions, rational singularities, log canonical thresholds},
 pdfcreator={},pdfproducer={},pdfstartview={}}
\numberwithin{equation}{section}
\usepackage{microtype}
\newtheorem{theorem}{Theorem}[section]
\newtheorem{proposition}[theorem]{Proposition}
\newtheorem{lemma}[theorem]{Lemma}
\newtheorem{corollary}[theorem]{Corollary}
\theoremstyle{remark}
\newtheorem{remark}[theorem]{Remark}
\newcommand{\A}{\mathbb A}
\newcommand{\C}{\mathbb C}
\newcommand{\F}{\mathbb F}
\newcommand{\Z}{\mathbb Z}
\newcommand{\Sym}{\operatorname{Sym}}
\newcommand{\Hol}{\operatorname{Hol}}
\newcommand{\lct}{\operatorname{lct}}
\newcommand{\Spec}{\operatorname{Spec}}

\newcommand{\Gr}{\operatorname{OGr}}
\newcommand{\Dprime}{D_{\mathrm{prime}}}
\newcommand{\Drat}{D_{\mathrm{rat}}}
\title{Sharp FRS and singularity thresholds for off-diagonal Gram maps}
\author{Ying Xie\\Kennesaw State University\\
  \href{mailto:yxie2@kennesaw.edu}{\texttt{yxie2@kennesaw.edu}}}
\date{}
\begin{document}
\maketitle
\begin{abstract}
The off-diagonal Gram map for $h$ vectors in an $M$-dimensional quadratic
space is flat with reduced fibers of rational singularities (FRS) in
characteristic zero if and only if
$M>\max_{2\le r\le h}(2h-r+1-2h/r)$.
Equivalently, this gives the exact FRS convolution threshold of
$x\mapsto(x_ix_j)_{i<j}$. For its zero-fiber ideal $I_{M,h}$, we also prove
\[
\lct(I_{M,h})=\min\left\{\binom h2,\
 \min_{0\le k\le h-2}\left(\binom{k+1}{2}+\frac{M(h-k)}2\right)\right\}.
\]
In the strict range, all truncated coefficient maps have geometrically
integral complete-intersection fibers and are faithfully flat over
$\Z[1/2]$, uniformly over graphs on $h$ vertices. The proof uses a sharp
weighted estimate for hollow symmetric matrices over finite local
principal ideal rings, paired with isotropic lower bounds. Within the
established point-counting approach to rational singularities, this
estimate also gives an explicit modulus of continuity for Gram-map
densities over every local field of odd residue characteristic.
\end{abstract}

\noindent\textit{Preprint.} This manuscript has not undergone peer review.
Comments and corrections are welcome.

\section{Introduction}
Pairwise orthogonality imposes a structured family of quadratic equations
on an ordered collection of vectors. These equations define orthogonal
frame schemes; choosing only selected pairs gives the
Lov\'asz--Saks--Schrijver ideals associated with graphs
\cite{LSS,CW}. Even when the
full frame scheme is a normal complete intersection, its singularities
need not be rational. The problem is to determine precisely how large
the ambient quadratic space must be to obtain rational singularities,
and to describe the singularities below that threshold.

We determine the exact FRS threshold for every number of vectors
using quadratic Fourier sums over finite local principal ideal rings.
The estimates include all Smith valuation patterns and are uniform
in the truncation length and the prescribed Gram entries. Besides the rationality
classification, the argument determines the log canonical threshold of the Gram
ideal even when the zero scheme is reducible or nonreduced. It also gives
integral coefficient fibers for arbitrary prescribed inner products and
for arbitrary subgraphs in a sharp range uniform over graphs.

\subsection{Notation and main results}
Fix integers $h\ge2$ and $M\ge1$, and put $e=\binom h2$ and $N=Mh$.
For an $M\times h$ matrix $V=(v_{ai})$, let
\[
 \Phi_{M,h}(V)=\left(\sum_{a=1}^M v_{ai}v_{aj}\right)_{i<j}.
\]
Write $I_{M,h}$ for the ideal generated by these $e$ polynomials and
$X_{M,h}=\Spec\C[V]/I_{M,h}$. Thus the scheme structure, and not only
its reduced support, is part of the notation.
We use the standard nondegenerate symmetric bilinear form. Over an
algebraically closed field of characteristic different from two, every
nondegenerate symmetric form is equivalent to this one. Singularity
invariants are taken over $\C$; the corresponding characteristic-zero
statements are geometric statements after extension to an algebraic
closure. The scheme $(X)_{L-1}$ denotes the jet scheme representing maps
$\operatorname{Spec} A[t]/(t^L)\to X$.

For an ideal $I$ on a smooth variety $Y$, we use the convention
\[
 \lct_Y(I)=\inf_E\frac{A_Y(E)}{\operatorname{ord}_E(I)},
\]
where $A_Y(E)$ is the log discrepancy and the infimum is over prime
divisors over $Y$ with positive order on $I$. No reducedness convention
is imposed on the ideal. We write $\lct(I)$ when the ambient affine
space is understood.
Define
\begin{align}
 \delta_h(r)&=\frac{r(2h-r+1)}2-h,\qquad 2\le r\le h, \label{eq:delta}\\
 F(h)&=\max_{2\le r\le h}\frac{2\delta_h(r)}r
      =\max_{2\le r\le h}\left(2h-r+1-\frac{2h}{r}\right),\label{eq:F}\\
 \Drat(h)&=\lfloor F(h)\rfloor+1.\label{eq:Drat}
\end{align}

Following Aizenbud--Avni \cite{AAfrs}, a morphism between smooth
characteristic-zero varieties is \emph{FRS} if it is flat and every
geometric fiber is reduced and has rational singularities.

\begin{theorem}\label{thm:rat}
The following are equivalent:
\begin{enumerate}
\item $M>F(h)$, or equivalently $M\ge\Drat(h)$;
\item the off-diagonal Gram map $\Phi_{M,h}$ is FRS;
\item its zero fiber $X_{M,h}$ has rational singularities.
\end{enumerate}
In this range every fiber is an integral complete intersection with
rational singularities. The same sufficient condition holds for every
 graph-Gram map on $h$ vertices, obtained by retaining any subset of
 the output coordinates. It is sharp as a condition uniform over all
 such graphs.
\end{theorem}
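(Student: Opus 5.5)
The proof runs through the Aizenbud--Avni point-counting criterion. For a morphism $\varphi:\A^N\to\A^e$ of smooth $\Z$-schemes, $\varphi$ is FRS if and only if the normalized point counts
\[
 q^{-L(N-e)}\,\#\{V\in (\mathcal O/\mathfrak m^L)^{M\times h}:\Phi(V)\equiv y\}
\]
are bounded uniformly in $L$, in $y$, and in the finite local rings $\mathcal O/\mathfrak m^L$ of (almost all) non-archimedean local fields. Equivalently, the jet schemes $(X)_{L-1}$ of all fibers have the expected dimension $L(N-e)$ and are irreducible.

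For (1)$\Rightarrow$(2), expand the fiber count as a Fourier sum over characters $\psi(\operatorname{tr}(SV^{T}V))$, where $S$ runs over hollow symmetric $h\times h$ matrices over $R=\mathcal O/\mathfrak m^L$. The inner sum over $V$ factors over the $M$ rows, giving $G(S)^M$ with $G(S)$ a quadratic Gauss sum. Its absolute value is controlled by the Smith (elementary divisor) valuations of $S$: roughly $|G(S)|\le |R|^{h}\prod_i q^{-(L-v_i)/2}$.

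The key estimate is a sharp weighted count of hollow symmetric matrices with a prescribed Smith pattern. The constraint to count is this: a hollow symmetric matrix of rank $r$ (modulo $\mathfrak m$) lies in a family of codimension governed by $\delta_h(r)$. Indeed, the space of hollow symmetric matrices has dimension $e$, and the rank-$r$ locus has dimension $\tfrac{r(2h-r+1)}2-h+\dots$. The exponent comparison then reduces to the inequality $M r/2>\delta_h(r)$ for each $2\le r\le h$, which is exactly $M>F(h)$. I would prove the estimate by induction on $L$, peeling off unit blocks of the Smith form. Summing over patterns gives absolute convergence of the normalized Fourier sum uniformly in $L$ and $y$, hence FRS.

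Graph-Gram maps restrict $S$ to matrices supported on the graph's edges. This is a subspace, so the same bound holds, and the argument applies uniformly over graphs. Fibers being complete intersections follows from flatness. Irreducibility, hence integrality, of each fiber follows from the count tending to the expected leading term, which gives irreducibility of the jet schemes via Lang--Weil. The implication (2)$\Rightarrow$(3) is trivial.

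For (3)$\Rightarrow$(1), suppose $M\le F(h)$, and pick $r$ with $Mr/2\le\delta_h(r)$. Rational singularities of the zero fiber, which is a homogeneous complete intersection (under the flatness that rational singularities forces, or else a dimension failure directly), would force all jet schemes to have dimension $L(N-e)$ by Mustaţă's criterion. I would exhibit an isotropic lower bound: take $V$ whose columns lie in a totally isotropic configuration, with the first $r$ columns spanning an isotropic-type subspace and the jets chosen degenerate. This produces a component of $(X_{M,h})_{L-1}$ of dimension exceeding $L(N-e)$, or of equal dimension, which contradicts irreducibility. For example, take the locus where the Gram matrix restricted to $r$ columns vanishes to high $t$-order while the remaining columns are free, and count its dimension as $LN-(\text{fewer conditions})$, using $\delta_h(r)$.

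Sharpness uniformly over graphs comes from taking the complete graph. Matching the lower bound to $\delta_h(r)$ exactly, including the boundary case $M=F(h)$ where only equality holds, is the main obstacle. There I expect to need a refined count showing an extra component of full dimension, so that the jet scheme is reducible and rational singularities fail. I would try first the constant-coefficient isotropic matrices lifted through Hensel's lemma.
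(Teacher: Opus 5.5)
The main gap is in (3)$\Rightarrow$(1), which your proposal does not establish. First, Musta\c{t}\u{a}'s criterion applies only once $X_{M,h}$ is known to be a complete intersection of dimension $N-e$. Rational singularities of the zero fiber do not by themselves force flatness. The paper supplies this in Lemma~\ref{lem:integralCI}: rationality gives normality, hence integrality, because every component of the cone passes through the vertex. Integrality forces $M\ge h$, since in the coordinate ring $\prod_i d_i=\det(V^tV)$ with each $d_i=\sum_a v_{ai}^2\neq0$, while this determinant vanishes identically when $M<h$. Then the orthonormal frame is a smooth point, so $\dim X_{M,h}=N-e$.

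Second, the family you sketch does not lie in the jet scheme. Every off-diagonal product must vanish modulo $t^L$, so no columns can be left free, and the index $r$ of $\delta_h(r)$ is not an isotropic dimension. The correct stratum puts \emph{all} $h$ columns into a maximal totally isotropic $u$-plane with $u=\lfloor M/2\rfloor$, so that $r=h-u=\dim\ker V$. It has dimension $uh+\dim\Gr(u,M)$. For $M=2u$ this equals $N-\tau$, where $\tau$ is the inner minimum in the threshold formula (attained at $k=u$), and $\tau\le e$ exactly when $M\le F(h)$; so jet order zero suffices. For $M=2u+1$ the stratum falls short by $r/2$, and you must use first jets. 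Relations among Jacobian rows at $V$ are hollow $H$ with $VH=0$, i.e.\ symmetric matrices with image in $\ker V$ and zero diagonal. This gives corank at least $\binom r2-u$, so the first jets over the stratum have dimension at least $N+u^2=2(N-\tau)\ge 2(N-e)$.

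Third, your instinct that equal dimension already contradicts irreducibility is right, and the boundary case $M=F(h)$ needs no refined count or Hensel lifting. The stratum lies in $Z=\{V^tV=0\}$, which is a proper closed subset of $X$ because the orthonormal frame is not in it. Reaching dimension $\dim X$ inside $Z$ is therefore a contradiction. In the odd case, reaching $2\dim X$ inside $p^{-1}(Z)\subset (X)_1$ contradicts irreducibility of $(X)_1$, which Musta\c{t}\u{a}'s criterion gives.

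For (1)$\Rightarrow$(2) your outline follows the paper's Fourier/Gauss-sum strategy. Invoking the Cluckers--Glazer--Hendel uniform count criterion is a legitimate shortcut compared with the paper's Lang--Weil, constructibility and projection arguments. So is the observation that for a graph the dual sum is a subsum of nonnegative terms. However, the central estimate is only announced, and your plan for it does not close. Residue rank alone does not control image length over $R_L$. More seriously, pivoting out a unit block leaves the hollow class, so an induction that peels unit blocks off a hollow matrix has nothing hollow to recurse on. Bounding hollow matrices by all symmetric matrices instead only gives $M>2h$.

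The paper's device is a $2\times2$ pivot at a unit off-diagonal entry. The Schur complement $S$ is then an arbitrary symmetric matrix, and hollowness of the remaining block becomes the decoupled equations $b_jc_j=-aS_{jj}/2$, each with $N_\ell(t)\le(\ell+1)q^\ell$ solutions. $S$ is then controlled by the symmetric recurrence of Lemma~\ref{lem:symmetric} together with content removal, which yields exactly the exponent $b_h(\sigma)=\max_r\{\delta_h(r)-\sigma r\}$.

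Finally, point counts do not detect nilpotents, so ``irreducible, hence integral'' needs reducedness from another source: FRS itself, or a smooth point plus Cohen--Macaulayness. Fibers over arbitrary complex targets also require a spreading-out step.
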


For $\phi_h:\A^h\to\A^e$, $\phi_h(x)=(x_ix_j)_{i<j}$, the sum
$\phi_h(x^{(1)})+\cdots+\phi_h(x^{(M)})$ is precisely $\Phi_{M,h}$.
Thus Theorem~\ref{thm:rat} determines the least FRS convolution power
of $\phi_h$: it is $\Drat(h)$. The graph statement concerns a condition
valid for \emph{all} graphs on $h$ vertices, not a necessary threshold
for each individual graph.

\begin{theorem}\label{thm:lct}
For all $M\ge1$ and $h\ge2$, the log canonical threshold in $\A^N_\C$ is
\begin{equation}\label{eq:lct}
 \lct(I_{M,h})
 =\min\left\{e,\ \min_{0\le k\le h-2}
       \left(\binom{k+1}{2}+\frac{M(h-k)}2\right)\right\}.
\end{equation}
The global threshold equals the threshold at the origin.
\end{theorem}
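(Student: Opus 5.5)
The plan is to compute $\lct(I_{M,h})$ by the jet-scheme/point-counting characterization of Mustaţă:
\[
 \lct(I)=N-\limsup_{L\to\infty}\frac{\dim (X_{M,h})_{L-1}}{L}.
\]
Equivalently, over $\Z_p$ with $p$ odd, it is the abscissa governing the growth of $\#\{V\in(\Z/p^L)^{M\times h}:\Phi_{M,h}(V)\equiv0\}$, namely $p^{L(N-\lct)+o(L)}$. The argument then has two halves: an upper bound on $\lct$, which needs lower bounds on jet dimensions via explicit families of jets, and a matching lower bound on $\lct$, which needs upper bounds on point counts via the Fourier analysis.

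For the upper bound I would exhibit explicit divisors or jet families. The term $e$ comes from the trivial valuation: blow up the origin, take $E$ the exceptional divisor, so that $\operatorname{ord}_E(I)=2$ and $A(E)=N$. That gives $N/2$, which is not $e$, so this is not quite the right family. Instead I would use the bound $\lct\le$ codimension whenever $X_{M,h}$ has a component of codimension $e$; such a component exists once $M$ is large enough that the generic complete-intersection locus is nonempty, and otherwise the $k$-terms dominate.

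For the term indexed by $k$ I would use a weighted family. Take the first $k$ columns generic with pairwise orthogonality, so they span a nondegenerate $k$-dimensional space, and scale the remaining $h-k$ columns by $t^{1}$. The monomial-type valuation that gives weight $1$ to the last $h-k$ columns and weight $0$ to the first $k$ is too crude, so I would instead assign weights to coordinates adapted to an isotropic configuration. This means making the first $k+1$ columns lie in a totally isotropic subspace and the rest small; this is where the ``isotropic lower bounds'' of the abstract enter. Counting the resulting jets modulo $t^L$ should give dimension $L\bigl(N-\binom{k+1}2-\tfrac{M(h-k)}2\bigr)+O(1)$. Here the factor $M(h-k)/2$ arises from $h-k$ columns of order $L/2$, whose products vanish automatically modulo $t^L$, and the $\binom{k+1}{2}$ term arises from the constraints among the remaining columns. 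I would verify this count directly by constructing solutions over $\Z/p^L$ via Hensel lifting.

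For the lower bound on $\lct$ I would write the zero count as
\[
 p^{-eL}\sum_{\psi}\sum_V\psi(\Phi(V)).
\]
The characters $\psi$ correspond to hollow symmetric matrices $A$ over $\Z/p^L$, and the inner sum factors as a product over the $M$ rows of a quadratic Gauss sum attached to $A$. Its modulus is $p^{ML h}\prod_i p^{-M(L-\nu_i)/2}$, where the $\nu_i$ are the Smith valuations of $A$. The count then reduces to the weighted sum
\[
 \sum_{A}\prod_i p^{-M(L-\nu_i)/2}.
\]
This is the key step, and the main obstacle: it requires the sharp estimate for the number of hollow symmetric matrices over $\Z/p^L$ with a given Smith valuation pattern. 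I expect that, for rank profile ``$h-k$ units and the rest divisible'', this number is about $p^{L(e-\binom{k+1}{2})}$, reflecting that a hollow matrix with a large-valuation $(k+1)$-block loses $\binom{k+1}2$ free entries. The hollowness is what replaces $\binom{k+1}2$ by the correct value rather than by the symmetric count $\binom{k+2}{2}$.

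To carry out this step I would stratify by Smith pattern and bound each stratum by reducing to lower-dimensional hollow blocks through a Schur-complement induction on $h$. Granting that bound, maximizing the exponent over patterns is a linear program whose vertices are exactly the $k$-patterns and the zero pattern, the latter giving $e$. This reproduces \eqref{eq:lct}.

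Finally, homogeneity of $I_{M,h}$ under scaling shows that the global threshold equals the threshold at the origin, since the lct is lower semicontinuous and every point degenerates to $0$ under the $\mathbb G_m$-action.
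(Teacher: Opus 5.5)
Your architecture matches the paper's: Fourier inversion to the weighted hollow sum $\sum_H q^{-(M/2)\rho_L(H)}$, a jet-dimension bound fed into Musta\c{t}\u{a}'s formula, isotropic jets for the reverse inequality, and the $\mathbb G_m$-degeneration for the origin. But the step you call the main obstacle is only granted, and the route you sketch for it does not close.

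\textbf{The hollow estimate.} Pivot a hollow $H$ on a unit entry $H_{12}=a$, with $(b_j,c_j)$ the $j$th column of $B$. The Schur complement $S=C-B^tA^{-1}B$ then has $S_{jj}=-2b_jc_j/a$. It is a general symmetric matrix, not a hollow one, so a Schur-complement induction cannot stay inside hollow matrices. The per-Smith-pattern hollow counts that your linear program presupposes are exactly what has to be proved.

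The missing idea is Lemma~\ref{lem:hollow}. Leave $S\in\Sym_{h-2}(R_\ell)$ completely unrestricted. Hollowness of $C=S+B^tA^{-1}B$ is then the decoupled system $b_jc_j=-aS_{jj}/2$. By Lemma~\ref{lem:products}, each equation has at most $(\ell+1)q^\ell$ solutions whatever the valuation of $S_{jj}$, including $S_{jj}=0$. With $\rho_\ell(H)=2\ell+\rho_\ell(S)$, this reduces everything to the weighted mass of all symmetric matrices. That mass is controlled by a unit-diagonal pivot recurrence (Lemma~\ref{lem:symmetric}, using the vectors $e_i+e_j$ and $2\in R_\ell^\times$). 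Lemma~\ref{lem:content} strips the common valuation, so only primitive matrices at each level $\ell$ are counted. Mixed Smith patterns are absorbed through the full image length and never enumerated, which gives $U_h(\ell,\sigma)\le B_h(\ell+1)^{2h-4}q^{\ell b_h(\sigma)}$.

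\textbf{Transfer to $\C$.} Growth over $\Z/p^L$ is not formally equivalent to the complex threshold. You need either Denef's good-reduction formula for large $p$, or, as in the paper, counts over $\F_q[t]/(t^L)$. Those counts are $\F_q$-points of jet schemes, so Lang--Weil applies, followed by spreading out over $\Z[1/2]$.

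\textbf{The upper-bound family.} As described, it is wrong. Suppose $k$ columns form an isotropic frame $A(t)$ and the other $h-k$ columns are literally $t^{L/2}W(t)$. The cross products force $A^tW\equiv0\pmod{t^{L/2}}$, costing $k(h-k)$ conditions at each of $L/2$ orders, and you only get
\[
 \lct(I_{M,h})\le\binom{k+1}{2}+\frac{M(h-k)}2+\frac{k(h-k)}2 .
\]
The sharp family, for even $L$, lets those columns carry an arbitrary component along the isotropic span:
\[
 V(t)=[A(t)\mid A(t)Y(t)+t^{L/2}W(t)],
\]
with $W\bmod t^{L/2}$ valued in $A^\perp$ taken modulo $A$. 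This family has dimension $L\bigl(N-\binom{k+1}{2}-\frac{M(h-k)}2\bigr)$. It exists only for $k\le\lfloor M/2\rfloor$, so you also need two facts. First, the quadratic in $k$ is minimized at $k=u=\lfloor M/2\rfloor$. Second, $u\le h-2$ whenever that minimum is below $e$, which follows from $F(h)\le2h-3$.

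The paper needs only $L\le2$. For $M=2u$ the maximal-isotropic stratum already has dimension $N-\tau$. For $M=2u+1$, a Jacobian-corank bound along that stratum gives first jets of dimension at least $2(N-\tau)$.

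Three smaller points:
\begin{itemize}
\item $\lct\le e$ needs no case analysis, since Krull gives $\operatorname{height}I_{M,h}\le e$.
\item The blow-up of the origin you discarded is precisely the $k=0$ term $N/2$.
\item Your ``first $k+1$ columns'' should be $k$.
\end{itemize}
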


For $L\ge1$, expand
$V(t)=\sum_{a=0}^{L-1}V_a t^a$ modulo $t^L$ and let $\Phi^{[L]}_{M,G}$
record all $L$ coefficients of each inner product indexed by an edge
of a simple graph $G$ on $h$ vertices.

\begin{theorem}\label{thm:coeff}
If $M>F(h)$, then for every $G$ and $L$ the morphism
\[
 \Phi^{[L]}_{M,G}:\A_{\Z[1/2]}^{LMh}
       \longrightarrow\A_{\Z[1/2]}^{L|E(G)|}
\]
is faithfully flat with geometrically integral complete-intersection
fibers of dimension $L(Mh-|E(G)|)$.
\end{theorem}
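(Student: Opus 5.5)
The plan is to reduce everything to point counts over finite local principal ideal rings. Work over $\Z[1/2]$. Put $R=\F_q[t]/(t^L)$, or more generally $R=\mathcal O/\varpi^L$ with $\mathcal O$ the ring of integers of a local field of odd residue characteristic. The fiber of $\Phi^{[L]}_{M,G}$ over an $\F_q$-point $c$ of $\A^{L|E(G)|}$ is the set of $V\in R^{M\times h}$ with $\langle v_i,v_j\rangle=c_{ij}$ for $ij\in E(G)$. By character orthogonality its size is
\[
 q^{-L|E(G)|}\sum_{S}\psi\bigl(-\langle S,c\rangle\bigr)\Bigl(\sum_{x\in R^h}\psi\bigl(x^{T}Sx\bigr)\Bigr)^M .
\]
Here $S$ runs over hollow symmetric matrices over $R$ supported on $E(G)$, and $\psi$ is a primitive additive character. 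The inner sum is a quadratic Gauss sum. After diagonalizing $S$ by Smith normal form over the PIR (legitimate since $2$ is invertible), its absolute value is $q^{Lh}\prod_i q^{-(L-\nu_i)/2}$, where $\nu_i$ are the valuations of the elementary divisors. The key estimate is the weighted bound for hollow symmetric matrices, which I take to be the paper's main technical lemma. It says that the number of hollow $S$ with a prescribed Smith valuation pattern is controlled by $\delta_h(r)$, where $r$ is the "rank" at each level. The point is that a hollow matrix of rank $r$ modulo the maximal ideal lies in a variety of dimension at most $\delta_h(r)+h$ inside the $e$-dimensional space, with $e-(\delta_h(r)+h)$ the codimension. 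I will prove this by bounding the dimension of rank-$\le r$ hollow symmetric matrices: a rank-$r$ symmetric matrix is determined by an $r$-dimensional row space and an $r\times r$ symmetric form, and hollowness imposes $h$ conditions that are independent enough. I then lift level by level through the $t$-adic filtration.

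Summing, each nonzero $S$ contributes at most $q^{LMh}\cdot q^{-(\text{defect})}$. The weighted count turns the total over $S\neq0$ into a geometric series in $q^{\delta_h(r)-Mr/2}$. When $M>F(h)=\max 2\delta_h(r)/r$, every exponent is negative, uniformly in $L$ and $c$. This gives
\[
 \#\text{fiber}=q^{L(Mh-|E(G)|)}\bigl(1+O(q^{-1/2})\bigr)
\]
with constants independent of $L$, $G$ and $c$. A subgraph only removes rows of the sum over $S$, so the same bound holds for every $G$.

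From uniform point counts I will deduce the geometric statements. By Lang--Weil applied over all finite fields $\F_q$ of odd characteristic and all $\F_q$-points $c$, each geometric fiber has dimension exactly $L(Mh-|E(G)|)$ and exactly one top-dimensional geometrically irreducible component. Dimension equal to the expected one makes every fiber a complete intersection, since it is cut out by $L|E(G)|$ equations in affine space. Miracle flatness then gives flatness of the morphism between regular schemes, and surjectivity follows from nonempty fibers, so the morphism is faithfully flat. Integrality needs reducedness. A complete intersection is Cohen--Macaulay, so it suffices to check generic reducedness ($R_0$) on the unique component. I will get this by exhibiting a point of each fiber where the Jacobian has full rank, for instance $V_0$ with linearly independent columns, available since $M\ge h$ in this range. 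The fiber near $V_0$ is a smooth torsor, and this point lies in the unique top component because that component is the only one.

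The main obstacle is uniformity in $L$ together with the jet structure. Lang--Weil must be applied to the truncated coefficient fibers, which live in $\A^{LMh}$. The Gauss-sum estimate has to hold for $R=\F_q[t]/(t^L)$ with all mixed Smith patterns, not just at the reduction level. I would handle this by stratifying $S$ according to its full valuation sequence and proving the counting lemma by induction on $L$, peeling off the reduction modulo $t$ at each step.
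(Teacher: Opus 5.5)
There is a genuine gap in the passage from point counts to geometry. Lang--Weil applied to $\F_q$-rational targets $c$ controls only the fibers over closed points of $\A^{L|E(G)|}_{\Z[1/2]}$. It says nothing directly about fibers over $\mathbb Q$- or $\C$-points, over generic points, or over targets with transcendental coordinates, and the characteristic-zero fibers are exactly the ones the theorem is later used for. So your claim that ``each geometric fiber has dimension $L(Mh-|E(G)|)$ and one top-dimensional component'' is justified only at closed points.

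For the same reason, miracle flatness cannot yet be invoked. It needs $\dim\mathcal O_{\mathcal X,x}=\dim\mathcal O_{\mathcal Y,y}+\dim\mathcal O_{\mathcal X_y,x}$ at every source point, including points over non-closed $y$. The missing step is a spreading-out argument:
\begin{itemize}
\item for this finitely presented map, the set of targets whose geometric fiber is reduced, irreducible and of dimension $d$ is constructible;
\item $\A^{L|E(G)|}_{\Z[1/2]}$ is Jacobson, and each closed point has a finite residue field of odd characteristic;
\item hence the complement is a constructible set with no closed points, so it is empty.
\end{itemize}
Only after this do Krull's theorem, the Cohen--Macaulay argument, and miracle flatness (via the local dimension formula at all points) go through. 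This is how the paper proceeds.

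Your reducedness step also needs an actual construction. For an arbitrary jet target you must produce a fiber point whose constant term has independent columns. One way:
\begin{itemize}
\item choose diagonal entries that make the constant Gram matrix invertible, which is possible because the determinant contains the product of the diagonal entries with coefficient one;
\item write this matrix as $V_0^tV_0$;
\item solve for $V_1,\dots,V_{L-1}$ successively, using surjectivity of $W\mapsto V_0^tW+W^tV_0$.
\end{itemize}
The jet Jacobian is then block lower triangular with full-rank diagonal blocks. For a graph target, first fill in the non-edge coordinates arbitrarily.

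Citing Lemma~\ref{lem:hollow} and Proposition~\ref{prop:count} for the estimate is legitimate. Your direct treatment of subgraphs is also correct: restrict the Fourier sum to $S$ supported on $E(G)$ and bound the error by the full hollow sum, since each $|G_L(S)|^M\ge0$. This is simpler than the paper's reduction through the coordinate projection and the generic fiber.

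However, the self-contained proof of the estimate you sketch would not work as stated, for two reasons.
\begin{itemize}
\item $\delta_h(r)+h$ is the dimension of \emph{symmetric} matrices of rank at most $r$. The hollow locus has dimension $\delta_h(r)$, and ``independent enough'' is precisely what must be proved.
\item Peeling off the reduction modulo $t$ does not give a closed induction. The Schur complement of a hollow matrix is symmetric but not hollow. Its diagonal is a quadratic function of the lifts, whose distribution depends on the residue data. Treating that diagonal as free loses a factor $q^{h-r_0}$ at each level, which changes the exponential rate, so the bound no longer yields the threshold $M>F(h)$.
\end{itemize}
The paper's idea is to pivot on a unit off-diagonal entry instead. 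The residual $S\in\Sym_{h-2}(R_\ell)$ then becomes completely unrestricted. Hollowness turns into the decoupled equations $b_jc_j=-aS_{jj}/2$, each with at most $(\ell+1)q^\ell$ solutions. The remaining sum is handled by a recursion for unconstrained symmetric matrices together with removal of the common valuation.
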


\subsection{Relation to earlier work}
The relation between finite-ring point counts and rational singularities
is an established framework. Aizenbud--Avni \cite{AAcount} connect these
counts with rational singularities and representation growth; Glazer
\cite{Glazer} completes the criterion for local complete intersections.
Cluckers--Glazer--Hendel \cite{CGH} give a relative characterization
uniform over the target, and Aizenbud--Avni \cite{AAfrs} relate FRS maps
to continuous densities of pushforward measures over local fields.
Glazer--Hendel \cite[Theorem 1.7]{GH} prove eventual FRS regularity under
convolution when the image spans the target affine space. This applies
to $\phi_h$, since $\phi_h(0)=0$ and $\phi_h(e_i+e_j)$ runs through the
standard basis of $\A^e$. Our convolution identity places the Gram
family in this context and gives an exact exponent for this map.
The general point-counting-to-singularity principle is not new here.

The specific input is Lemma~\ref{lem:hollow}, which controls the full
weighted sum over hollow Smith strata with the sharp exponential rate.
Its uniformity in the ring and the conductor, together with matching
isotropic lower bounds, yields the threshold classification. It also
gives quantitative density estimates at every odd residue characteristic
(Proposition~\ref{prop:density}), without discarding small odd primes.
Our direct passage through Lang--Weil, constructibility, and jets records
the additional universal coefficient-fiber statement over $\Z[1/2]$.
Related uses of jet dimensions, rational singularities, and arithmetic
counts occur for quiver moment maps in work of Budur \cite{Budur} and
Vernet \cite{Vernet}.

The graph ideals originate in orthogonal representations studied by
Lov\'asz--Saks--Schrijver \cite{LSS}; Conca--Welker \cite{CW} develop their
algebraic relation to coordinate sections of determinantal varieties.
For generic determinantal ideals, Johnson \cite{Johnson} computes
multiplier ideals and log canonical thresholds, and Docampo
\cite[Theorem 5.6]{Docampo} obtains the threshold via arcs. These give
the relevant determinantal context, but their generic-minor formula
does not directly compute the threshold of the off-diagonal Gram ideal.

Casabella and Sammartano determine the complete-intersection and prime
thresholds for orthogonal frames \cite[Theorems 1--2]{CS}.
In particular, their prime threshold is
\begin{equation}\label{eq:Dprime}
\Dprime(h)=\min\left\{
2\left\lfloor\frac{2h+1-\sqrt{8h+1}}2\right\rfloor+2,\
2\left\lfloor\frac{2h+1-\sqrt{8h-7}}2\right\rfloor+1
\right\},
\end{equation}
and $M\ge\Dprime(h)$ gives a normal complete intersection.
They ask whether this range already gives ($F$-)rational singularities
\cite[Question 8.1]{CS}. Theorems \ref{thm:lct} and \ref{thm:rat}
give the sharp replacement in characteristic zero. The nonrational
examples at the exceptional prime thresholds were established in
\cite[Theorem 1.3 and Sections 6--7]{Xie} using generic transverse
singularities. Section~\ref{sec:sharpness} gives a first-jet proof of
the same obstruction. The earlier paper also studies defining ideals and
depth of principal components, and proves non-$F$-rationality for its
exceptional family in every odd characteristic. Those results are
distinct from the positive classification proved here. The positive
estimate closes the gap left
by these counterexamples and also yields the exact ideal threshold
and the coefficient-fiber theorem. The rationality criterion is
proved independently of the prime-threshold theorem. We use
\cite{CS} for the comparison with the normal range, including
Table~\ref{tab:thresholds} and the normal-range corollaries. The positive
argument does not use the principal-component results of \cite{Xie}.

For complete graphs with $h\ge2$, the sufficient condition $M\ge3h-4$
follows from \cite[arXiv version, Lemma 2.5 and Theorem 3.2]{TV}, since
$\operatorname{pmd}(K_h)\le2h-3$ and $k(K_h)=h-1$.
For sparse graphs, graph-specific bounds can be smaller than the
uniform bound proved here; sharpness here means sharpness over all
graphs on $h$ vertices, including the complete graph. The present
threshold has asymptotic form $2h-2\sqrt{2h}+O(1)$.
Our positive-characteristic conclusion is Theorem \ref{thm:coeff};
no claim of strong $F$-regularity or $F$-rationality in that full range is
made. The sharp positive-characteristic sufficiency question remains open.

\subsection{Boundary cases and examples}
The distinction between normality, log canonicity, and rationality is
particularly small but nontrivial. We prove that $\Drat(h)$ is always
$\Dprime(h)$ or $\Dprime(h)+1$. In the normal range,
$X_{M,h}$ is log canonical exactly when $M\ge F(h)$.
Consequently the normal log-canonical but nonrational cases are exactly
\[
 h=2a^2,\qquad M=(2a-1)^2,\qquad a\ge2.
\]
The first is $(M,h)=(9,8)$. In contrast, $(M,h)=(15,12)$ is normal
but not log canonical: its ideal threshold is $131/2$, whereas its
codimension is $66$. For $(M,h)=(3,4)$, well outside the normal range,
the ideal threshold is $11/2$.

Table~\ref{tab:thresholds} illustrates the distinction. Its log-canonical
column is the smallest $M$ in the normal range with log-canonical
singularities, namely $\max\{\Dprime(h),\lceil F(h)\rceil\}$.

\begin{table}[htbp]
\centering
\begin{tabular}{rrrr}
\toprule
$h$ & Normal & Log canonical & Rational \\
\midrule
2 & 2 & 2 & 2 \\
3 & 3 & 3 & 3 \\
4 & 4 & 4 & 4 \\
6 & 7 & 7 & 7 \\
7 & 8 & 8 & 8 \\
8 & 9 & 9 & 10 \\
11 & 14 & 14 & 14 \\
12 & 15 & 16 & 16 \\
16 & 22 & 22 & 22 \\
18 & 25 & 25 & 26 \\
\bottomrule
\end{tabular}
\caption{Selected sharp ambient-dimension thresholds for the full zero fiber.}
\label{tab:thresholds}
\end{table}

The exact threshold is also detected by the zeroth and first jet schemes.
Writing $c=\lct(I_{M,h})$, we show
\[
 c=N-\max\left\{\dim X_{M,h},\frac12\dim(X_{M,h})_1\right\}.
\]
Moreover, $\dim(X_{M,h})_{L-1}=L(N-c)$ whenever $M$ or $L$ is even.
In the normal range the first jet alone detects rationality and log
canonicity. These properties are consequences of the structure of this
family, rather than general properties of jet schemes.

\subsection{Proof strategy}
The proof has three parts. First, a unit diagonal pivot estimates
weighted counts of arbitrary symmetric matrices over finite local
rings. A two-by-two unit off-diagonal pivot reduces the hollow case to
that estimate; the residual diagonal constraints are scalar product
equations $bc=t$. Second, Fourier inversion gives counts of prescribed
Gram fibers. Lang--Weil and a separate smooth-point argument turn these
counts into integral complete-intersection coefficient fibers, and
constructibility passes from finite fields to the universal model over
$\Z[1/2]$. Third, jet-scheme criteria give rational singularities and the
lower bound for the ideal threshold. Maximal-isotropic strata provide
the opposite bound. For rationality necessity, an elementary
integrality obstruction first gives the expected complete-intersection
dimension; isotropic strata and their first jets then exclude the
entire complementary range.

The proofs of the uniform statements do not depend on computations.
The ancillary code provides exact finite controls for the arithmetic,
Fourier normalization, and first-jet counts.

\section{Weighted counts of symmetric matrices}
Let $\mathcal O$ be a discrete valuation ring with finite residue
field of odd cardinality $q$, uniformizer $\pi$, and
$R_\ell=\mathcal O/(\pi^\ell)$. Write $\nu_\ell(x)$ for the valuation
of $x\in R_\ell$, with $\nu_\ell(0)=\ell$. Thus
$|\pi^vR_\ell|=q^{\ell-v}$ for $0\le v\le\ell$.
For a square matrix $A$ over $R_\ell$, put
\[
 \rho_\ell(A)=\log_q|\operatorname{im} A|.
\]
If its Smith factors have valuations $a_i$, allowing $a_i=\ell$,
then $\rho_\ell(A)=\sum_i(\ell-a_i)$. Invertible row and column
operations preserve $\rho_\ell$. A matrix is called \emph{primitive}
if at least one entry is a unit; it need not be invertible.

\subsection{Common valuation and scalar products}
We first record the two counting facts used in the matrix estimates.

\begin{lemma}[Removal of the common valuation]\label{lem:content}
Let $\mathcal M_k(R)$ denote either $\Sym_k(R)$ or $\Hol_k(R)$,
the symmetric or hollow symmetric matrices, respectively. Every nonzero
$A\in\mathcal M_k(R_L)$ determines a unique pair $(\ell,A_0)$ with
\[
 A=\pi^{L-\ell}\widetilde A_0,\qquad
 1\le\ell\le L,\quad A_0\in\mathcal M_k(R_\ell)\text{ primitive},
\]
where the equality is independent of the lift $\widetilde A_0$ to
$R_L$. Here $L-\ell$ is the minimum valuation of the entries of $A$,
and
\begin{equation}\label{eq:contentlength}
 \rho_L(A)=\rho_\ell(A_0).
\end{equation}
Consequently, for every $\sigma >0$,
\begin{equation}\label{eq:contentmass}
 \sum_{A\in\mathcal M_k(R_L)}q^{-\sigma \rho_L(A)}
 =1+\sum_{\ell=1}^{L}
       \sum_{\substack{A_0\in\mathcal M_k(R_\ell)\\A_0\text{ primitive}}}
              q^{-\sigma \rho_\ell(A_0)}.
\end{equation}
\end{lemma}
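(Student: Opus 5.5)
The plan is to construct the pair $(\ell,A_0)$ directly from the minimum entry valuation and then verify the three claims in order: uniqueness with lift independence, equality of image sizes, and the resulting mass identity.

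\emph{Existence and uniqueness.} Let $A\in\mathcal M_k(R_L)$ be nonzero and let $m=\min_{i,j}\nu_L(a_{ij})$. Since $A\ne0$, we have $0\le m\le L-1$; put $\ell=L-m\in\{1,\dots,L\}$. Every entry lies in $\pi^mR_L$. The map $R_\ell\to\pi^mR_L$, $x\mapsto\pi^m\tilde x$, is well defined because a change of lift by an element of $\pi^\ell\mathcal O$ changes $\pi^m\tilde x$ by a multiple of $\pi^L$. It is an isomorphism of $\mathcal O$-modules, as both sides have cardinality $q^{\ell}$ and the map is surjective. Applying its inverse entrywise produces a unique $A_0$ over $R_\ell$ with $A=\pi^{L-\ell}\widetilde A_0$. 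Symmetry and vanishing of the diagonal are preserved, since the map is additive and injective, so $A_0\in\mathcal M_k(R_\ell)$. An entry of valuation exactly $m$ corresponds to a unit of $R_\ell$, so $A_0$ is primitive. Conversely, if $A=\pi^{L-\ell'}\widetilde B$ with $B$ primitive over $R_{\ell'}$, then the minimum valuation of $A$ is $L-\ell'$. This forces $\ell'=\ell$, and injectivity then forces $B=A_0$.

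\emph{The image identity.} Here I expect the only real care to be needed: it has to be checked that $\rho$ is computed compatibly over the two rings. The cleanest route uses Smith form. Choose invertible $P,Q$ over $R_\ell$ with $PA_0Q=\operatorname{diag}(\pi^{a_i})$, allowing $a_i=\ell$. Lift $P$ and $Q$ to $\mathcal O$; the lifts remain invertible over $R_L$. Multiplying the lifted relation by $\pi^{L-\ell}$ shows that $\tilde P A\tilde Q$ is diagonal with entries $\pi^{a_i+L-\ell}$ modulo $\pi^L$. Note that $a_i=\ell$ gives the entry $\pi^L=0$, which is consistent. Therefore
\[
\rho_L(A)=\sum_i\bigl(L-(a_i+L-\ell)\bigr)=\sum_i(\ell-a_i)=\rho_\ell(A_0).
\]
Alternatively, one can argue directly: $\operatorname{im}A=\pi^{L-\ell}\cdot\operatorname{im}\widetilde A_0$, and the isomorphism $R_\ell^k\cong\pi^{L-\ell}R_L^k$ identifies this with $\operatorname{im}A_0$.

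\emph{The mass identity.} The zero matrix has $\rho_L=0$ and contributes the term $1$. For the nonzero matrices, the assignment $A\mapsto(\ell,A_0)$ is a bijection onto $\bigsqcup_{\ell=1}^L\{A_0\in\mathcal M_k(R_\ell)\text{ primitive}\}$: injectivity was shown above, and the inverse is $(\ell,A_0)\mapsto\pi^{L-\ell}\widetilde A_0$, which has minimum valuation exactly $L-\ell$. Substituting \eqref{eq:contentlength} into the sum over nonzero $A$ then gives \eqref{eq:contentmass} for every $\sigma>0$; in fact every $\sigma$ works, since all sums are finite.
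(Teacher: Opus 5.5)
Your proof is correct and follows the paper's argument. The entrywise bijection $R_\ell\to\pi^{L-\ell}R_L$ gives existence, uniqueness and lift-independence, and the disjoint valuation strata give the mass identity. The only variation is in proving $\rho_L(A)=\rho_\ell(A_0)$: your primary route lifts a Smith form, whereas the paper uses your alternative, namely $Ax=\iota_\ell(A_0\bar x)$ with $\bar x=x\bmod\pi^\ell$, together with surjectivity of the reduction $R_L^k\to R_\ell^k$.
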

\begin{proof}
Multiplication by $\pi^{L-\ell}$ induces an additive bijection
\[
 \iota_\ell:R_\ell\longrightarrow\pi^{L-\ell}R_L.
\]
Its injectivity follows because
$\pi^{L-\ell}x=0$ modulo $\pi^L$ exactly when $x=0$ modulo
$\pi^\ell$. Applied entrywise, this bijection identifies primitive
matrices over $R_\ell$ with matrices of common valuation $L-\ell$
over $R_L$. Symmetry and zero diagonal entries are preserved.
For a vector $x\in R_L^k$,
\[
 Ax=\iota_\ell(A_0\bar x),\qquad \bar x=x\bmod\pi^\ell.
\]
Reduction onto $R_\ell^k$ is surjective, so
$\operatorname{im} A=\iota_\ell(\operatorname{im} A_0)$ and
\eqref{eq:contentlength} follows. The valuation strata are disjoint
and exhaust the nonzero matrices, proving \eqref{eq:contentmass}.
There is no additional lift factor: different lifts of $A_0$ give
the same matrix $A$ after multiplication by $\pi^{L-\ell}$.
\end{proof}

\begin{lemma}[A scalar product count]\label{lem:products}
For $t\in R_\ell$, let
$N_\ell(t)=|\{(b,c)\in R_\ell^2:bc=t\}|$. Then
\begin{equation}\label{eq:productcount}
 N_\ell(t)=
 \begin{cases}
  (w+1)(q-1)q^{\ell-1},&\nu_\ell(t)=w<\ell,\\
  q^\ell+\ell(q-1)q^{\ell-1},&t=0.
 \end{cases}
\end{equation}
In particular, $N_\ell(t)\le(\ell+1)q^\ell$ for every $t$.
\end{lemma}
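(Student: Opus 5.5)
The plan is to count solutions of $bc=t$ by stratifying according to the valuation of $b$, and then to derive the uniform bound from the exact formulas.

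\emph{Case $t\ne0$.} Write $w=\nu_\ell(t)<\ell$ and $t=\pi^w u$ with $u$ a unit. If $bc=t$, then $\nu(b)+\nu(c)=w$ in $\mathcal O$ after lifting; in particular $\nu_\ell(b)=j\le w$. Fix $j\in\{0,\dots,w\}$ and write $b=\pi^j\beta$ with $\beta$ a unit. The class of $\beta$ is determined modulo $\pi^{\ell-j}$, so there are $(q-1)q^{\ell-j-1}$ such elements $b$.

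For such $b$, the equation $\pi^j\beta c=\pi^w u$ holds exactly when $c\equiv\beta^{-1}\pi^{w-j}u$ modulo $\pi^{\ell-j}$, which gives $q^{j}$ choices of $c$. Hence each $j$ contributes $(q-1)q^{\ell-1}$. Summing over the $w+1$ values of $j$ gives $(w+1)(q-1)q^{\ell-1}$.

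\emph{Case $t=0$.} If $b=0$, every $c$ works, giving $q^\ell$ pairs. If $\nu_\ell(b)=j$ with $0\le j\le\ell-1$, there are $(q-1)q^{\ell-j-1}$ such $b$. The condition $bc=0$ means $\nu(c)\ge\ell-j$, so $c\in\pi^{\ell-j}R_\ell$, which has $q^{j}$ elements. Each $j$ therefore again contributes $(q-1)q^{\ell-1}$, and summing over the $\ell$ values of $j$ gives $q^\ell+\ell(q-1)q^{\ell-1}$.

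\emph{The uniform bound.} In the first case, $w+1\le\ell$ and $(q-1)q^{\ell-1}\le q^\ell$, so $N_\ell(t)\le\ell q^\ell$. In the second case, $N_\ell(0)\le q^\ell+\ell q^\ell=(\ell+1)q^\ell$. Both cases give $N_\ell(t)\le(\ell+1)q^\ell$.

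The only delicate step is the bookkeeping of the fiber sizes: verifying that the $c$-fiber over a fixed $b$ of valuation $j$ has exactly $q^j$ elements. The equation $bc=t$ holds modulo $\pi^\ell$, while $b=\pi^j\beta$, so $c$ is pinned down only modulo $\pi^{\ell-j}$. I would check this with the same injectivity of multiplication by $\pi^{j}$, namely $R_{\ell-j}\to\pi^jR_\ell$, used in the proof of Lemma~\ref{lem:content}.
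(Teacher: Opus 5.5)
Your proof is correct and follows essentially the same route as the paper: both stratify by the valuation $v$ of $b$, with $(q-1)q^{\ell-v-1}$ such $b$, each contributing $q^v$ values of $c$ exactly when $v\le w$ (or $v\le\ell-1$ for $t=0$), plus the extra $q^\ell$ pairs with $b=0$ when $t=0$. The paper gets the fiber size from the image $\pi^vR_\ell$ and the kernel of order $q^v$ of multiplication by $b$. You instead solve for $c$ explicitly modulo $\pi^{\ell-v}$, which is the same computation.
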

\begin{proof}
There are $(q-1)q^{\ell-v-1}$ elements $b$ of valuation $v<\ell$.
For each, multiplication by $b$ has image $\pi^vR_\ell$ and a
kernel of cardinality $q^v$. Thus $bc=t$ has exactly $q^v$
solutions in $c$ if $t\in\pi^vR_\ell$, and none otherwise.
Each eligible valuation $v$ contributes $(q-1)q^{\ell-1}$ pairs.
For nonzero $t$ the eligible valuations are $0,\ldots,w$.
For $t=0$ they are $0,\ldots,\ell-1$, and $b=0$ contributes
the additional $q^\ell$ pairs.
\end{proof}

\subsection{The symmetric-matrix recurrence}
For a real number $\sigma >0$, define
\[
 Z_k(\ell,\sigma )=\sum_{A\in\Sym_k(R_\ell)}q^{-\sigma \rho_\ell(A)},\qquad
 a_k(\sigma )=\max_{0\le j\le k}
          \left\{\frac{j(2k-j+1)}2-\sigma j\right\}.
\]
The zero-by-zero convention is $Z_0=1$ and $a_0=0$.

\begin{lemma}\label{lem:symmetric}
Set $C_0=1$ and $C_k=1+\binom{k+1}{2}C_{k-1}$ for $k\ge1$.
Then
\begin{equation}\label{eq:symmetricbound}
 Z_k(\ell,\sigma )\le C_k(\ell+1)^k q^{\ell a_k(\sigma )}
\end{equation}
for every odd $q$, every $\ell\ge1$, and every real $\sigma >0$.
\end{lemma}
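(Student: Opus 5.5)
We argue by induction on $k$, with $k=0$ trivial since $Z_0=1=C_0$ and $a_0=0$. The constant recursion $C_k=1+\binom{k+1}2C_{k-1}$ suggests the structure. The term $1$ accounts for the zero matrix. The factor $\binom{k+1}2$ counts the possible positions of a unit entry in a primitive matrix, with $k$ diagonal positions and $\binom k2$ off-diagonal ones. After a reduction, each such position leads to a $(k-1)$-dimensional problem. We first apply Lemma~\ref{lem:content} with $\mathcal M_k=\Sym_k$. This reduces $Z_k(L,\sigma)$ to $1$ plus a sum over $1\le\ell\le L$ of the primitive contributions $P_k(\ell,\sigma)=\sum_{A_0\text{ primitive}}q^{-\sigma\rho_\ell(A_0)}$. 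It therefore suffices to prove
\[
 P_k(\ell,\sigma)\le\binom{k+1}2 C_{k-1}(\ell+1)^{k-1}q^{\ell a_k(\sigma)} .
\]
Summing over $\ell\le L$ then costs at most a factor $L$. Since $a_k(\sigma)\ge0$ and $q^{\ell a_k}$ increases with $\ell$, this gives $Z_k(L,\sigma)\le 1+L\binom{k+1}2C_{k-1}(L+1)^{k-1}q^{La_k}\le C_k(L+1)^kq^{La_k}$.

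For the primitive count, we classify $A$ by the first unit entry in a fixed ordering of the upper-triangular positions. Over-counting by summing over all positions is acceptable, since we only need an upper bound.

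\emph{Unit diagonal entry $A_{ii}$.} Symmetric elimination $A\mapsto PAP^T$ with the pivot gives $A\simeq(u)\oplus B$. Here $B=A'-u^{-1}ww^T\in\Sym_{k-1}(R_\ell)$, and $\rho_\ell(A)=\ell+\rho_\ell(B)$. For a fixed pivot row $(u,w)$, the map $A'\mapsto B$ is a bijection of $\Sym_{k-1}(R_\ell)$. The number of pivot rows is at most $q^{\ell k}$. The contribution is therefore at most $q^{\ell(k-\sigma)}Z_{k-1}(\ell,\sigma)$, and by induction this is at most $C_{k-1}(\ell+1)^{k-1}q^{\ell(k-\sigma+a_{k-1}(\sigma))}$.

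\emph{Unit off-diagonal entry $A_{ij}$ with all diagonal entries non-units.} Here we use $2\in R_\ell^\times$ ($q$ odd). Replacing basis vector $e_i$ by $e_i+e_j$ gives new diagonal entry $A_{ii}+2A_{ij}+A_{jj}$, which is a unit. This is a change of basis in $GL_k(R_\ell)$ acting on the symmetric matrix, so it preserves $\rho_\ell$. It acts on the set of matrices with $A_{ij}$ a unit as a bijection. Composing with the diagonal-pivot elimination again gives contribution at most $q^{\ell(k-\sigma)}Z_{k-1}(\ell,\sigma)$. A cleaner variant is to work with a fixed target position: for a given position, the affine change of variables sends the set of matrices whose entry there is a unit injectively to the set with a unit at $(i,i)$.

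It remains to check $k-\sigma+a_{k-1}(\sigma)\le a_k(\sigma)$. Take $j'$ optimal for $a_{k-1}$ and $j=j'+1$. Then
\[
 \frac{j(2k-j+1)}2-\sigma j=\frac{j'(2k-j')}2+k-\sigma-\sigma j' ,
\]
and the right-hand side equals $\frac{j'(2(k-1)-j'+1)}2-\sigma j'+k-\sigma$ because $\frac{j'(2k-j')}2-\frac{j'(2k-j'-1)}2=\frac{j'}2\ge0$. So the inequality holds with room to spare. Summing the at most $\binom{k+1}2$ position classes gives the claimed bound on $P_k$.

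The main obstacle is the off-diagonal case. We must make sure that the reduction to a unit diagonal pivot is a genuine injection on the counted set and that it preserves $\rho_\ell$. We must also make sure the factor $q^{\ell k}$ bounds the free pivot data, namely the pivot row together with the parametrization of $A'$ by $B$. We would first verify this bookkeeping by writing $A=\begin{pmatrix}u&w^T\\ w&A'\end{pmatrix}$ and counting exactly, obtaining $|\{(u,w)\}|\le q^{\ell k}$ and a bijection $A'\leftrightarrow B$.
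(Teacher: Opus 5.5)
Your proposal is essentially the paper's proof. It uses the same $\binom{k+1}{2}$ charts coming from $e_i$ and $e_i+e_j$ (with $2\in R_\ell^\times$), and the same unit-pivot Schur-complement bijection giving $P_k(\ell,\sigma)\le\binom{k+1}{2}q^{(k-\sigma)\ell}Z_{k-1}(\ell,\sigma)$. It also uses the same recurrence from Lemma~\ref{lem:content}. Bounding by $q^{\ell a_k(\sigma)}$ before rather than after summing over $\ell$ is only cosmetic.

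Two slips should be fixed, though neither affects the conclusion.

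First, in the exponent check, $(j'+1)(2k-j')/2=j'(2(k-1)-j'+1)/2+k$ holds exactly. So your displayed identity is off by $j'/2$, and your next step errs by $j'/2$ in the other direction. There is no room to spare: the shifted term equals $k-\sigma$ plus the $j'$-term of $a_{k-1}(\sigma)$. That is, $a_k(\sigma)=\max\{0,\,k-\sigma+a_{k-1}(\sigma)\}$, which is exactly the inequality you need.

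Second, the substitution $e_i\mapsto e_i+e_j$ changes the $(i,j)$ entry to $A_{ij}+A_{jj}$. So it is not a bijection on the set of matrices with $A_{ij}$ a unit. It sends a matrix into the unit-$(i,i)$ chart exactly when $A_{ii}+2A_{ij}+A_{jj}$ is a unit, which holds when the diagonal entries are nonunits. Your ``cleaner variant'' must keep that hypothesis; the matrix with $A_{ii}=-2A_{ij}-A_{jj}$ shows it fails otherwise. Alternatively, use the paper's chart $\{A:(e_i+e_j)^tA(e_i+e_j)\in R_\ell^\times\}$. Then the only fact needed is that congruence by the fixed basis change is a $\rho_\ell$-preserving bijection of all of $\Sym_k(R_\ell)$.
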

\begin{proof}
Let $P_k(\ell,\sigma )$ be the same sum restricted to primitive matrices.
A primitive symmetric matrix has $v^tAv$ a unit for at least one of
the vectors
\[
 e_i\quad(1\le i\le k),\qquad e_i+e_j\quad(i<j).
\]
Indeed, if every diagonal entry is a nonunit, a unit off-diagonal
entry gives such a vector because $2$ is invertible. Each vector
extends to a fixed unimodular basis, whose congruence action is a
bijection preserving image length. These $\binom{k+1}{2}$ charts
cover the primitive matrices. They may overlap; summing their
nonnegative masses gives an upper bound.

In one chart write
\[
 A=\begin{pmatrix}a&b^t\\ b&C\end{pmatrix},\qquad a\in R_\ell^\times,
 \qquad S=C-bb^t/a.
\]
Invertible block elimination changes $A$ into $\operatorname{diag}(a,S)$,
so $\rho_\ell(A)=\ell+\rho_\ell(S)$. Conversely, each
$(a,b,S)\in R_\ell^\times\times R_\ell^{k-1}\times\Sym_{k-1}(R_\ell)$
gives exactly one matrix in the chart, by $C=S+bb^t/a$.
In particular $S$ is unrestricted, including $S=0$ and matrices
all of whose entries are nonunits. Since the number of pairs $(a,b)$
is $(q^\ell-q^{\ell-1})q^{(k-1)\ell}\le q^{k\ell}$,
\begin{equation}\label{eq:symunit}
 P_k(\ell,\sigma )\le\binom{k+1}{2}q^{(k-\sigma )\ell}Z_{k-1}(\ell,\sigma ).
\end{equation}
Lemma~\ref{lem:content} now gives the explicit recurrence
\begin{equation}\label{eq:symrecurrence}
 Z_k(L,\sigma )=1+\sum_{\ell=1}^{L}P_k(\ell,\sigma )
 \le1+\binom{k+1}{2}\sum_{\ell=1}^{L}
                       q^{(k-\sigma )\ell}Z_{k-1}(\ell,\sigma ).
\end{equation}

Induct on $k$. Separating $j=0$ and replacing $j$ by $j+1$ in
the other terms of the maximum gives
\[
 a_k(\sigma )=\max\{0,k-\sigma +a_{k-1}(\sigma )\}.
\]
For every $1\le\ell\le L$, including when
$k-\sigma +a_{k-1}(\sigma )<0$,
\[
 q^{\ell(k-\sigma +a_{k-1}(\sigma ))}\le q^{La_k(\sigma )}.
\]
Use this inequality and
$\sum_{\ell=1}^{L}(\ell+1)^{k-1}\le(L+1)^k$
in \eqref{eq:symrecurrence}. Since $a_k(\sigma )\ge0$, the initial
term $1$ is absorbed by the stated value of $C_k$.
\end{proof}

\subsection{An exact hollow pivot count}
Let $U_h(\ell,\sigma )$ be the weighted mass of primitive matrices in
$\Hol_h(R_\ell)$, for $h\ge2$.

\begin{lemma}\label{lem:hollow}
With $B_h=\binom h2 C_{h-2}$, one has
\begin{equation}\label{eq:hollow}
 U_h(\ell,\sigma )\le B_h(\ell+1)^{2h-4}q^{\ell b_h(\sigma )},\qquad
 b_h(\sigma )=\max_{2\le r\le h}\{\delta_h(r)-\sigma r\}.
\end{equation}
Moreover,
\begin{equation}\label{eq:fullhollow}
 \sum_{H\in\Hol_h(R_L)}q^{-\sigma \rho_L(H)}
     =1+\sum_{\ell=1}^L U_h(\ell,\sigma ).
\end{equation}
\end{lemma}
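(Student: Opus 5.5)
Cover the primitive hollow matrices by $\binom h2$ two-by-two off-diagonal pivot charts. In each chart, eliminate the pivot block to leave a symmetric $(h-2)\times(h-2)$ Schur complement that is free apart from its diagonal. The diagonal conditions become scalar product equations $bc=t$, which Lemma~\ref{lem:products} counts. The remaining symmetric mass is bounded by Lemma~\ref{lem:symmetric}.

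\emph{Charts.} A primitive $H\in\Hol_h(R_\ell)$ has zero diagonal, so some off-diagonal entry $H_{ij}$ with $i<j$ is a unit. This gives $\binom h2$ charts; overlaps only help the upper bound. A permutation congruence moves the unit entry to position $(1,2)$, and it preserves hollowness and $\rho_\ell$. Write
\[
 H=\begin{pmatrix}J&B^t\\ B&C\end{pmatrix},\qquad
 J=\begin{pmatrix}0&u\\ u&0\end{pmatrix},\quad u\in R_\ell^\times,
\]
with $B\in R_\ell^{(h-2)\times2}$ having rows $(b_i,c_i)$, and $C\in\Hol_{h-2}(R_\ell)$. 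Since $J$ is invertible, block elimination by invertible congruences gives $\operatorname{diag}(J,S)$ with $S=C-BJ^{-1}B^t\in\Sym_{h-2}(R_\ell)$. Hence $\rho_\ell(H)=2\ell+\rho_\ell(S)$.

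\emph{Parametrization.} I will show that the chart is in bijection with the data $(u,B,S)$ subject only to diagonal constraints. We have $(BJ^{-1}B^t)_{ii}=2b_ic_i/u$. Because $C$ is hollow, the diagonal of $S$ is forced: $S_{ii}=-2b_ic_i/u$. The off-diagonal entries of $S$ are unrestricted, and they determine those of $C$. So we may fix $u$ and an arbitrary $S\in\Sym_{h-2}(R_\ell)$. Each row $(b_i,c_i)$ must then solve $b_ic_i=-uS_{ii}/2$, using that $2$ is a unit since $q$ is odd. By Lemma~\ref{lem:products} there are at most $(\ell+1)q^\ell$ solutions for each $i$. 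Using at most $q^\ell$ choices of $u$, the mass of one chart is at most
\[
 q^{\ell}\bigl((\ell+1)q^\ell\bigr)^{h-2}q^{-2\sigma\ell}Z_{h-2}(\ell,\sigma)
 \le C_{h-2}(\ell+1)^{2h-4}q^{\ell(h-1-2\sigma+a_{h-2}(\sigma))}.
\]
The inequality is Lemma~\ref{lem:symmetric} with $k=h-2$; for $h=2$ we use $Z_0=1$ and $a_0=0$. Multiplying by $\binom h2$ produces the constant $B_h$.

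\emph{Exponent identity.} It remains to check that $h-1-2\sigma+a_{h-2}(\sigma)=b_h(\sigma)$. Substitute $r=j+2$ in the maximum defining $a_{h-2}$. Expanding gives
\[
 \frac{(r-2)(2h-r-1)}2+h-1-\sigma r=\frac{r(2h-r+1)}2-h-\sigma r=\delta_h(r)-\sigma r .
\]
As $j$ runs over $0,\dots,h-2$, the index $r$ runs over $2,\dots,h$, which proves \eqref{eq:hollow}. Identity \eqref{eq:fullhollow} is \eqref{eq:contentmass} of Lemma~\ref{lem:content} applied with $\mathcal M_k=\Hol_h$.

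The main obstacle is the diagonal coupling in the parametrization step. The hollow condition prevents $S$ from being a free symmetric matrix. The point is that this coupling decouples row by row into independent product equations, each costing only a factor $\ell+1$ beyond $q^\ell$. It is essential that $S$ itself is counted over all of $\Sym_{h-2}$, not $\Hol_{h-2}$.
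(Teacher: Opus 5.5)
Your proposal is correct and follows the paper's proof essentially step by step. It uses the same $\binom h2$ unit off-diagonal pivot charts and the same Schur complement $S$ ranging over all of $\Sym_{h-2}(R_\ell)$, with the hollow condition decoupling into product equations $b_ic_i=-uS_{ii}/2$ counted by Lemma~\ref{lem:products}, then Lemma~\ref{lem:symmetric}, the substitution $r=j+2$, and Lemma~\ref{lem:content} for \eqref{eq:fullhollow}.
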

\begin{proof}
Every primitive hollow matrix has a unit off-diagonal entry.
Let $W_h(\ell,\sigma )$ be the weighted mass in the chart $H_{12}\in R_\ell^\times$.
Permuting indices identifies the other $\binom h2$ charts with this
one. Coverage and nonnegativity give
\begin{equation}\label{eq:hollowcover}
 U_h(\ell,\sigma )\le\binom h2 W_h(\ell,\sigma ).
\end{equation}
No disjointness of these charts is required.

Put $k=h-2$. In the fixed chart write
\[
 H=\begin{pmatrix}A&B\\ B^t&C\end{pmatrix},\qquad
 A=\begin{pmatrix}0&a\\a&0\end{pmatrix},\quad a\in R_\ell^\times,
 \qquad S=C-B^tA^{-1}B.
\]
The invertible matrix
\[
 E=\begin{pmatrix}I_2&-A^{-1}B\\0&I_k\end{pmatrix}
 \quad\text{satisfies}\quad
 E^tHE=\begin{pmatrix}A&0\\0&S\end{pmatrix}.
\]
As $A$ is invertible,
\begin{equation}\label{eq:hollowlength}
 \rho_\ell(H)=2\ell+\rho_\ell(S).
\end{equation}
This identity uses the full image length of $S$, irrespective of its
rank modulo $\pi$.

Fix $a\in R_\ell^\times$ and an arbitrary $S\in\Sym_k(R_\ell)$.
If column $j$ of $B$ is $(b_j,c_j)^t$, the condition that
$C=S+B^tA^{-1}B$ have zero diagonal is exactly
\[
 b_jc_j=-aS_{jj}/2\qquad(1\le j\le k).
\]
These equations use disjoint pairs of variables. Each choice of $B$
satisfying them determines a unique hollow $C$; its off-diagonal
entries impose no further conditions. Thus the fiber over $(a,S)$
has exactly $\prod_{j=1}^k N_\ell(-aS_{jj}/2)$ elements. Combining
this count with \eqref{eq:hollowlength} gives the exact chart identity
\begin{equation}\label{eq:hollowexact}
 W_h(\ell,\sigma )=q^{-2\sigma \ell}
   \sum_{a\in R_\ell^\times}\ 
   \sum_{S\in\Sym_k(R_\ell)}
      q^{-\sigma \rho_\ell(S)}\prod_{j=1}^k N_\ell(-aS_{jj}/2).
\end{equation}
In particular this sum includes every singular or nonprimitive
residual matrix $S$. The case $S_{jj}=0$ is covered by the second
line of \eqref{eq:productcount}.

Use $N_\ell(t)\le(\ell+1)q^\ell$ and
$|R_\ell^\times|\le q^\ell$ in \eqref{eq:hollowexact}, then apply
\eqref{eq:hollowcover}. This yields
\begin{equation}\label{eq:hollowpivotbound}
 U_h(\ell,\sigma )\le\binom h2(\ell+1)^k
              q^{(h-1-2\sigma )\ell}Z_k(\ell,\sigma ).
\end{equation}
Lemma~\ref{lem:symmetric} supplies a second factor $(\ell+1)^k$.
The exponent becomes
\[
 h-1-2\sigma +a_{h-2}(\sigma )
 =\max_{0\le j\le h-2}\{\delta_h(j+2)-\sigma (j+2)\}=b_h(\sigma ),
\]
proving \eqref{eq:hollow}. Finally, apply Lemma~\ref{lem:content}
to hollow matrices to obtain \eqref{eq:fullhollow}.
\end{proof}

\begin{remark}[Boundary case and mixed valuations]\label{rem:mixedvaluation}
For $h=2$ the residual matrix is empty, so the product in
\eqref{eq:hollowexact} equals one and
\[
 U_2(\ell,\sigma )=(q^\ell-q^{\ell-1})q^{-2\sigma \ell}.
\]
Thus $B_2=1$ and the length exponent $2h-4=0$ cause no exception.
For a nontrivial valuation example, over $\Z/27$ take
\[
 H=3\begin{pmatrix}0&1&1\\1&0&3\\1&3&0\end{pmatrix}.
\]
Its common valuation is one. Lemma~\ref{lem:content} replaces it by
the displayed primitive matrix $H_0$ over $\Z/9$, without changing
image length. The pivot $a=1$ leaves $S=(-6)=(3)$ over $\Z/9$.
Therefore
\[
 \rho_3(H)=\rho_2(H_0)=4+\rho_2((3))=5.
\]
The Smith valuations of $H$ are $(1,1,2)$. Its reduction modulo $3$
is zero, while the primitive reduction has rank two; neither residue
rank alone records the residual contribution to image length.
\end{remark}

For comparison, \eqref{eq:hollowexact} gives the following complete
count in the chart $H_{12}\in(\Z/9)^\times$ for $h=3$.
There are six choices of $a$; the three rows account for all
$6\cdot9^2=486$ matrices in this chart.
\begin{center}
\begin{tabular}{lrrrr}
\toprule
Residual $S$ & Choices of $S$ & $N_2(-aS/2)$ & Matrices $H$ & $\rho_2(H)$\\
\midrule
$0$ & 1 & 21 & 126 & 4\\
$3$ or $6$ & 2 & 12 & 144 & 5\\
A unit & 6 & 6 & 216 & 6\\
\bottomrule
\end{tabular}
\end{center}
The middle row records precisely the nonzero nonunit residuals.

\section{Fourier inversion and a uniform fiber estimate}
A primitive additive character $\psi:R_L\to\C^\times$ means one
whose restriction to the socle $\pi^{L-1}R_L$ is nontrivial.
The pairing $(a,b)\mapsto\psi(ab)$ is nondegenerate: for $a\ne0$,
the nonzero ideal $aR_L$ contains the socle. Such characters exist
for these finite local principal ideal rings.
For $H\in\Hol_h(R_L)$ set
\[
 G_L(H)=q^{-Lh}\sum_{x\in R_L^h}\psi(x^tHx/2).
\]

\begin{lemma}\label{lem:gauss}
For every symmetric $H$, including singular $H$,
\[
 |G_L(H)|^2=q^{-\rho_L(H)}.
\]
\end{lemma}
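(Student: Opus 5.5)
We expand $|G_L(H)|^2$ as a double sum and then collapse it using orthogonality of characters. Writing
\[
 |G_L(H)|^2=q^{-2Lh}\sum_{x,y\in R_L^h}\psi\bigl((x^tHx-y^tHy)/2\bigr),
\]
we substitute $x=y+z$. Since $H$ is symmetric and $2$ is invertible in $R_L$ (as $q$ is odd),
\[
 (x^tHx-y^tHy)/2=z^tHy+z^tHz/2 .
\]
For fixed $z$, the inner sum over $y$ is $\sum_{y}\psi(z^tHy)$. The map $y\mapsto z^tHy=(Hz)^ty$ is an additive character of $R_L^h$. By nondegeneracy of the pairing $(a,b)\mapsto\psi(ab)$, applied coordinatewise, this character is trivial exactly when $Hz=0$. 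Hence the sum over $y$ equals $q^{Lh}$ if $Hz=0$ and $0$ otherwise.

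This leaves
\[
 |G_L(H)|^2=q^{-Lh}\sum_{z\in\ker H}\psi(z^tHz/2)=q^{-Lh}|\ker H|,
\]
because $z^tHz=0$ whenever $Hz=0$. Finally, $|\ker H|\cdot|\operatorname{im}H|=q^{Lh}$ for the additive map $H:R_L^h\to R_L^h$. Therefore $|G_L(H)|^2=q^{-\rho_L(H)}$.

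The only point needing care is nondegeneracy for a non-field ring. If $Hz\neq0$, some coordinate $w$ of $Hz$ is nonzero. Then $wR_L$ contains the socle $\pi^{L-1}R_L$, so $b\mapsto\psi(wb)$ is nontrivial and its sum over $R_L$ vanishes. This is exactly the property of a primitive character recorded before the lemma, and it holds without any invertibility of $H$. That is why singular $H$ is allowed. Hollowness plays no role: the argument uses only symmetry, and symmetry is needed to write $y^tHz=z^tHy$.
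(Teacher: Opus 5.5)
Your proof is correct and follows essentially the same route as the paper: expand the square, change variables so the exponent becomes linear in one variable, use the primitive character's nondegeneracy to restrict to $\ker H$, and conclude with $|\ker H|\cdot|\operatorname{im}H|=q^{Lh}$. The only difference is cosmetic. The paper uses the symmetric substitution $u=x-y$, $v=(x+y)/2$, which makes the exponent exactly $u^tHv$. Your shift $x=y+z$ instead leaves the phase $\psi(z^tHz/2)$ outside the $y$-sum, and you correctly note that this phase is trivial on $\ker H$.
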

\begin{proof}
In the square of the absolute value change variables from $(x,y)$
to $u=x-y$ and $v=(x+y)/2$. This is a bijection, and the exponent
becomes $u^tHv$. Summing over $v$ gives $q^{Lh}$ if $Hu=0$ and
zero otherwise. Thus $|G_L(H)|^2=|\ker H|/q^{Lh}$.
\end{proof}

For $c\in R_L^e$, character orthogonality and the independence of
the $M$ rows of $V$ give
\begin{equation}\label{eq:Fourier}
 \frac{|\Phi_{M,h}^{-1}(c)(R_L)|}{q^{L(N-e)}}
   =\sum_{H\in\Hol_h(R_L)}
       \psi\left(-\sum_{i<j}H_{ij}c_{ij}\right)G_L(H)^M.
\end{equation}
The normalization uses one coordinate per unordered pair; the
factor $1/2$ in $x^tHx/2$ ensures the same convention.

\begin{proposition}\label{prop:count}
There is a constant $K_h$ depending only on $h$ with the following
properties.
\begin{enumerate}
\item If $M>F(h)$ and
$\epsilon=\min_{2\le r\le h}(Mr/2-\delta_h(r))$, then
\begin{equation}\label{eq:strictcount}
 \left|\frac{|\Phi_{M,h}^{-1}(c)(R_L)|}{q^{L(N-e)}}-1\right|
       \le K_hq^{-\epsilon}
\end{equation}
for every $L\ge1$ and every target $c$.
\item For arbitrary $M\ge1$, put
$b=b_h(M/2)$ and $\gamma=e-\max(0,b)$. Then
\begin{equation}\label{eq:generalcount}
 |\Phi_{M,h}^{-1}(c)(R_L)|
   \le K_h(L+1)^{2h-3}q^{L(N-\gamma)}.
\end{equation}
\end{enumerate}
\end{proposition}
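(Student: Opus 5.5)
Both parts follow from the Fourier formula \eqref{eq:Fourier}, bounding each term by its absolute value. By Lemma~\ref{lem:gauss}, $|G_L(H)^M|=q^{-(M/2)\rho_L(H)}$, so with $\sigma=M/2$,
\[
 \left|\frac{|\Phi_{M,h}^{-1}(c)(R_L)|}{q^{L(N-e)}}-1\right|
 \le\sum_{H\ne0}q^{-\sigma\rho_L(H)}
 =\sum_{\ell=1}^L U_h(\ell,\sigma).
\]
The term $H=0$ contributes exactly $1$, since $G_L(0)=1$ and $\psi(0)=1$. The last equality is \eqref{eq:fullhollow}. This uniform-in-$c$ bound is all that is needed, because the phases $\psi(-\sum H_{ij}c_{ij})$ have modulus one and are simply discarded.

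For part 1, note that $M>F(h)$ means $Mr/2>\delta_h(r)$ for every $2\le r\le h$. Hence $b_h(M/2)=\max_r(\delta_h(r)-Mr/2)=-\epsilon<0$. Lemma~\ref{lem:hollow} then gives
\[
 \sum_{\ell\ge1}U_h(\ell,M/2)\le B_h\sum_{\ell\ge1}(\ell+1)^{2h-4}q^{-\epsilon\ell}.
\]
The main point is to make this at most $K_hq^{-\epsilon}$ with $K_h$ independent of $q$, $M$ and $L$. I would factor out $q^{-\epsilon}$ and bound the rest by $\sum_{\ell\ge1}(\ell+1)^{2h-4}q^{-\epsilon(\ell-1)}$. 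Uniformity needs a lower bound on $\epsilon$ that does not depend on $M$. Since $\delta_h(r)\in\tfrac12\Z$ and $Mr/2\in\tfrac12\Z$, we get $\epsilon\ge1/2$. We also have $q\ge3$. So the tail is at most $\sum_{\ell}(\ell+1)^{2h-4}3^{-(\ell-1)/2}$, a convergent series depending only on $h$. Taking $K_h$ to be $B_h$ times this series proves \eqref{eq:strictcount}. The care here is in $\epsilon\ge 1/2$: without it, $\epsilon$ could be arbitrarily small as $M$ varies and the constant would blow up.

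For part 2, I use the same triangle inequality without subtracting $1$:
\[
 |\Phi_{M,h}^{-1}(c)(R_L)|\le q^{L(N-e)}\Bigl(1+\sum_{\ell=1}^LB_h(\ell+1)^{2h-4}q^{\ell b}\Bigr).
\]
Each term satisfies $q^{\ell b}\le q^{L\max(0,b)}$, and $\sum_{\ell\le L}(\ell+1)^{2h-4}\le(L+1)^{2h-3}$. Absorbing the $1$ gives
\[
 |\Phi_{M,h}^{-1}(c)(R_L)|\le(1+B_h)(L+1)^{2h-3}q^{L(N-e+\max(0,b))}=(1+B_h)(L+1)^{2h-3}q^{L(N-\gamma)},
\]
which is \eqref{eq:generalcount}. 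Enlarging $K_h$ to cover both parts finishes the argument. The only genuinely delicate step is the half-integrality observation in part 1; everything else is bookkeeping on top of Lemmas~\ref{lem:gauss} and \ref{lem:hollow}.
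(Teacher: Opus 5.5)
Your proof is correct and matches the paper's argument essentially step for step. Both apply the triangle inequality to \eqref{eq:Fourier} and use Lemma~\ref{lem:gauss} to get weights $q^{-(M/2)\rho_L(H)}$, then invoke Lemma~\ref{lem:hollow}. Part~1 uses $\epsilon\ge 1/2$ together with $q\ge3$ to compare with $\sum_{\ell}(\ell+1)^{2h-4}3^{-(\ell-1)/2}$, and part~2 bounds each exponential factor by $q^{L\max(0,b)}$ and sums the polynomial factors.
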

\begin{proof}
The term $H=0$ in \eqref{eq:Fourier} is one. Apply the triangle
inequality, Lemma \ref{lem:gauss}, and Lemma \ref{lem:hollow}.
In the strict case $b=-\epsilon$, with $\epsilon\ge1/2$ since
$M,h,r$ are integers. Thus the nonzero mass is at most
\[
 B_h\sum_{\ell=1}^{L}(\ell+1)^{2h-4}q^{-\epsilon\ell}
 \le B_hq^{-\epsilon}
       \sum_{\ell\ge1}(\ell+1)^{2h-4}3^{-(\ell-1)/2}.
\]
The last series is finite and depends only on $h$.
Without strictness, bound every exponential factor by
$q^{L\max(0,b)}$ and sum the polynomial factors.
\end{proof}

\subsection{Quantitative densities over local fields}
Let $\mathcal O$ now be the valuation ring of a nonarchimedean local
field with residue cardinality $q$ odd. Give each $\mathcal O^a$ its
Haar probability measure. For a graph $G$ with $g$ edges, define the
locally constant function on $\mathcal O^g$
\[
 f_{G,L}(c)=q^{-L(N-g)}
   \#\{V\in R_L^{M\times h}:\Phi_{M,G}(V)=c\bmod\pi^L\}.
\]
It is the density of the pushforward after averaging on residue balls
of radius $q^{-L}$.

\begin{proposition}\label{prop:density}
Suppose $M>F(h)$, and put
$\epsilon=\min_{2\le r\le h}(Mr/2-\delta_h(r))$ and $d_h=2h-4$.
There is a constant $A_h$ depending only on $h$ such that, for every
such $\mathcal O$ and every graph $G$, the pushforward of Haar probability
under $\Phi_{M,G}$ has a continuous density $f_G$, and
\begin{align}
 \|f_G-1\|_\infty&\le A_hq^{-\epsilon},\label{eq:densityuniform}\\
 \|f_G-f_{G,L}\|_\infty
   &\le A_h(L+2)^{d_h}q^{-\epsilon(L+1)},\qquad L\ge0,
       \label{eq:densitytail}
\end{align}
where $f_{G,0}=1$. In particular, if $c-c'\in\pi^L\mathcal O^g$, then
\[
 |f_G(c)-f_G(c')|
 \le 2A_h(L+2)^{d_h}q^{-\epsilon(L+1)}.
\]
These statements hold in both equal and mixed characteristic.
\end{proposition}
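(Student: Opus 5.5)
The plan is to write the graph-Gram analogue of \eqref{eq:Fourier} and compare consecutive levels $L$ and $L+1$ through their Fourier expansions. For a graph $G$ with edge set $E$, character orthogonality gives
\[
 f_{G,L}(c)=\sum_{H\in\Hol_G(R_L)}\psi\Bigl(-\sum_{\{i,j\}\in E}H_{ij}c_{ij}\Bigr)G_L(H)^M,
\]
where $\Hol_G(R_L)\subset\Hol_h(R_L)$ consists of the hollow matrices supported on the edges of $G$. Since this is a subset of $\Hol_h$ and all terms are controlled in absolute value, Lemma~\ref{lem:gauss} bounds each term by $q^{-M\rho_L(H)/2}$.

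The next step is to split the sum according to the common valuation. Lemma~\ref{lem:content} restricts to $\Hol_G$ because support is preserved, so a nonzero $H$ over $R_L$ corresponds to a primitive $H_0\in\Hol_G(R_\ell)$ with $\rho_L(H)=\rho_\ell(H_0)$. The primitive part over $R_\ell$ is dominated by $U_h(\ell,M/2)\le B_h(\ell+1)^{2h-4}q^{-\epsilon\ell}$, by Lemma~\ref{lem:hollow} with $b_h(M/2)=-\epsilon$. I also need to check that the Fourier term attached to $H=\pi^{L-\ell}\widetilde H_0$ is independent of $L\ge\ell$: both $\psi(-\langle H,c\rangle)$ and $G_L(H)$ should depend only on $H_0$ and on $c\bmod\pi^\ell$. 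This uses the compatibility $\psi_{L}(\pi^{L-\ell}x)=\psi_\ell(x)$ of primitive characters, which I would fix once from a character of the local field of conductor $\mathcal O$. For $G_L$ the reduction is that $x^tHx/2$ depends only on $x\bmod\pi^\ell$, after which the average over $R_L^h$ becomes an average over $R_\ell^h$. The result is a single absolutely convergent series
\[
 f_G(c):=1+\sum_{\ell\ge1}T_\ell(c),\qquad f_{G,L}=1+\sum_{\ell=1}^LT_\ell,\qquad |T_\ell|\le B_h(\ell+1)^{2h-4}q^{-\epsilon\ell},
\]
where each $T_\ell$ is locally constant on $\pi^\ell$-balls.

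The estimates then follow directly. Uniform convergence makes $f_G$ continuous. Since $\epsilon\ge1/2$ and $q\ge3$, the tail $\sum_{\ell>L}$ is at most $A_h(L+2)^{d_h}q^{-\epsilon(L+1)}$; I obtain this by factoring out the first term and bounding the remaining ratio series by a constant depending only on $h$, as in Proposition~\ref{prop:count}. This gives \eqref{eq:densitytail}, and the case $L=0$ gives \eqref{eq:densityuniform}. The $f_{G,L}$ are the averages of the pushforward measure over residue balls. They converge uniformly to $f_G$, and the ball averages of a continuous function converge to that function, so $f_G$ is the density. For the modulus of continuity, note that $f_{G,L}(c)=f_{G,L}(c')$ when $c\equiv c'$ modulo $\pi^L$; applying the tail bound twice yields the factor $2$.

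I expect the main obstacle to be the level-compatibility claim in the second paragraph: verifying carefully that the Fourier term attached to $H$ is literally the same at all levels $L\ge\ell$, with the normalizations $q^{-Lh}$ and $q^{-L(N-g)}$ matching. Nothing in the argument depends on equal versus mixed characteristic, since only odd $q$ and the ring structure of $R_\ell$ are used.
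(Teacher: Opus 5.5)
Your proposal is correct and is essentially the paper's argument. The common-valuation splitting with level-compatible characters is the finite-level form of the paper's indexing of characters of $\mathcal O^e$ by exact conductor, and Lemmas~\ref{lem:gauss} and \ref{lem:hollow} with the same tail series give the bounds. You do differ in two small points.
\begin{itemize}
\item You handle a general graph directly, by restricting to hollow matrices supported on $E(G)$, which form a subset of the primitive hollow matrices. The paper instead treats $K_h$ and then integrates out the unused coordinates; this is the dual form of the same operation.
\item Your step identifying $f_G$ as the density is phrased loosely. The cleanest justification is $\int_B f_{G,L}=\mu(B)$ whenever $L$ is at least the level of the ball $B$, followed by uniform convergence.
\end{itemize}
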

\begin{proof}
First take $G=K_h$ and write $\mu$ for the pushforward measure. Characters of the compact additive group
$\mathcal O^e$ have finite conductor. Those of exact conductor
$\ell\ge1$ are indexed by primitive hollow matrices over $R_\ell$,
after choosing a generating additive character of $R_\ell$.
The Fourier coefficient of the pushforward at such a character is
$G_\ell(H)^M$: the $M$ rows are independent and Haar-uniform.
Lemmas~\ref{lem:gauss} and \ref{lem:hollow} therefore give
\[
 \sum_{\chi\text{ of conductor }\ell}
       |\widehat\mu(\chi)|
   \le U_h(\ell,M/2)
   \le B_h(\ell+1)^{d_h}q^{-\epsilon\ell}.
\]
Since $\epsilon\ge1/2$, these bounds are summable. Fourier inversion
on each finite quotient identifies its partial sum through conductor
$L$ with $f_{K_h,L}$. The series converges uniformly to a continuous
function $f$. For each residue ball $B$, the integrals
$\int_B f_{K_h,L}$ equal the pushforward measure of $B$ once $L$ is
at least its level. Uniform convergence gives the same identity for
$f$, so $f$ is the density of the pushforward.

For the tail, write $\ell=L+1+j$ and use
$(L+j+2)^{d_h}\le(L+2)^{d_h}(j+1)^{d_h}$. Then
\[
 \sum_{\ell>L}(\ell+1)^{d_h}q^{-\epsilon\ell}
 \le (L+2)^{d_h}q^{-\epsilon(L+1)}
       \sum_{j\ge0}(j+1)^{d_h}3^{-j/2}.
\]
This proves \eqref{eq:densitytail}; summing from $\ell=1$ gives
\eqref{eq:densityuniform}, after increasing $A_h$ if needed.
The finite-level function is constant on each residue ball, which
gives the stated modulus. For a general graph, integrate $f$ over the
unused $e-g$ target coordinates. The same averaging sends
$f_{K_h,L}$ to $f_{G,L}$ and cannot increase a supremum norm. It
preserves continuity because the target is compact. This proves all
claims, including the case of a graph without edges.
\end{proof}

\begin{remark}
For $h=2$ the exponent $\epsilon=M-1$ in the modulus cannot be
increased. Indeed, for $M\ge2$ the map is the dot product of two
vectors in $\mathcal O^M$. The first vector has minimum coordinate
valuation $j$ with probability $(1-q^{-M})q^{-Mj}$. Conditional on
this event, its dot product with a Haar-uniform second vector is
uniform on $\pi^j\mathcal O$. Consequently, with $\nu(0)=\infty$,
\[
 f(c)=(1-q^{-M})\sum_{j=0}^{\nu(c)}q^{-(M-1)j}.
\]
In particular, $f(0)-f(\pi^L)$ is a positive constant times
$q^{-(M-1)(L+1)}$. For general $h$, no optimality of the modulus
is asserted. Its continuity is consistent with the general FRS
theory \cite{AAfrs}; the displayed conductor-dependent bound and
its validity at every odd residue characteristic follow directly
from the hollow-matrix estimate.
\end{remark}

\section{All coefficient fibers}
We now prove Theorem \ref{thm:coeff}. Fix $L$, put $R=\Z[1/2]$
and $d=L(N-e)$, and first consider the complete-graph map
\[
 f:\mathcal X=\A_R^{LN}\longrightarrow\mathcal Y=\A_R^{Le}.
\]
The argument first determines the geometric fibers at closed points,
then transfers their properties to all points, proves flatness, and
finally removes the unwanted edge coordinates.

\begin{proof}[Proof of Theorem \ref{thm:coeff}]
For a finite field $\F_q$ of odd characteristic, use
$R_L=\F_q[t]/(t^L)$ in Proposition \ref{prop:count}.
Fix a coefficient target $c$ defined over $\F_q$ and apply
\eqref{eq:strictcount} over every finite extension. If $\mathcal F=f^{-1}(c)$,
the resulting estimate is
\begin{equation}\label{eq:extensioncount}
 \left|\frac{\#\mathcal F(\F_{q^a})}{q^{ad}}-1\right|
       \le K_hq^{-a\epsilon},\qquad a\ge1,
\end{equation}
with $\epsilon>0$ independent of $a$. Choose a finite extension
defining all geometric irreducible components of the reduced support
of $\mathcal F$, and let $a$ tend to infinity through multiples of that degree.
Apply \cite[Theorem 1]{LW} to the projective closures of its
positive-dimensional components. The boundaries, pairwise intersections,
and components of smaller dimension contribute lower-order terms by
\cite[Lemma 1]{LW}; a zero-dimensional reduced scheme instead has
an eventually constant point count. Thus, if $D=\dim \mathcal F$ and $b_D$
is the number of geometric components of dimension $D$, then
$\#\mathcal F(\F_{q^a})/q^{aD}\to b_D$. Equation~\eqref{eq:extensioncount}
forces $D=d$ and $b_D=1$: the normalized count would otherwise tend
to zero, to infinity, or to a positive integer different from one.

The fiber is defined by $Le$ equations in $\A^{LN}$.
Every irreducible component has dimension at least $d$, by
Krull's height theorem. It follows that there is just one
component. At every point of the fiber, the defining ideal has
height $Le$ and $Le$ generators in a regular local ring. These
generators form a regular sequence there. The fiber is therefore
a local complete intersection and is Cohen--Macaulay.

We check reducedness separately, since point counts cannot detect
nilpotents. Taking $r=h$ in \eqref{eq:F} gives $F(h)\ge h-1$,
so $M>F(h)$ implies $M\ge h$. Over the algebraic closure, complete
the prescribed constant off-diagonal target to an invertible
symmetric $h\times h$ matrix by choosing its diagonal. This is
possible since the determinant contains the product of the
diagonal variables with coefficient one. Realize this matrix
as $V_0^tV_0$ with $V_0$ of full column rank.

For a symmetric matrix $Q$, choose $W$ with $V_0^tW=Q/2$;
this is possible because $V_0^t$ is surjective. Then
$V_0^tW+W^tV_0=Q$, so the differential, including its off-diagonal
part, is surjective. At order $a\ge1$ the Gram coefficient is
\[
 V_0^tV_a+V_a^tV_0+
       \sum_{b=1}^{a-1}V_b^tV_{a-b}.
\]
The sum is already known when $V_a$ is chosen. Thus the off-diagonal
equations can be solved successively. At the resulting point, the
coefficient Jacobian is block lower triangular, with the same
surjective off-diagonal differential on each diagonal block.
It has rank $Le$, so this is a smooth point of the fiber.
The unique component is consequently generically reduced. The
Cohen--Macaulay coordinate ring has no embedded associated primes;
generic reducedness therefore makes it reduced. This proves
geometric integrality for every target over a finite field.

\begin{samepage}
The map $f$ is affine and of finite presentation, and $\mathcal Y$
is Noetherian, quasi-compact, and Jacobson. Consider the subset
\[
 U=\{y\in\mathcal Y: \mathcal X_{\bar y}
       \text{ is reduced, has one irreducible component,
       and has dimension }d\}.
\]
Here $\bar y$ denotes a geometric point over $y$.
By \cite[Tags 0579, 055B, 05F9]{Stacks}, each condition is locally
constructible. Since $\mathcal Y$ is affine, it is quasi-compact and
quasi-separated; \cite[Tag 054E]{Stacks} therefore makes $U$ constructible.\par
\end{samepage}

These constructibility results do not assume flatness.
Every closed point of $\mathcal Y$ has a finite
residue field of odd characteristic, so the preceding argument
puts every closed point in $U$.

For completeness, a nonempty constructible subset of a Noetherian
Jacobson scheme contains a nonempty locally closed subset, hence a
nonempty open subset of some closed subscheme. Closed points are
dense in that closed subscheme and remain closed in the ambient
scheme. Such a subset therefore contains a closed point.
Apply this observation to the constructible complement
$\mathcal Y\setminus U$: it must be empty. We obtain integral
geometric fibers of dimension $d$ at all points, including generic
points and targets with transcendental coordinates. The height
argument above applies to each such fiber and gives the
complete-intersection assertion. All these conclusions concern
geometric fibers and persist under extension of their residue fields.

We verify the local dimension hypothesis for miracle flatness,
including points in positive characteristic. Let $x$ lie over $y$,
and let $\mathfrak p$ be their common image in $\Spec R$. Put
\[
 \eta=\dim\mathcal O_{\Spec R,\mathfrak p}\in\{0,1\},\quad
 b=\operatorname{trdeg}_{\kappa(\mathfrak p)}\kappa(y),\quad
 t=\operatorname{trdeg}_{\kappa(y)}\kappa(x).
\]
The dimension formulas for polynomial rings over $\mathbb Q$ when $\eta=0$
and over the discrete valuation ring $R_{(\mathfrak p)}$ when $\eta=1$ give
\begin{align*}
 \dim\mathcal O_{\mathcal X,x}&=LN+\eta-b-t,\\
 \dim\mathcal O_{\mathcal Y,y}&=Le+\eta-b,\\
 \dim\mathcal O_{\mathcal X_y,x}&=d-t.
\end{align*}
The last equality uses the integral $d$-dimensional fiber over
$\kappa(y)$. Thus the first local dimension is the sum of the
other two. The target local ring is regular and the source local
ring is regular, hence Cohen--Macaulay. Miracle flatness
\cite[Tag 00R4]{Stacks} now gives flatness of $f$.
Its fibers are nonempty, so $f$ is faithfully flat. After any field
base change, flatness pulls the coordinate regular sequence of a
target point back to the displayed coefficient equations. Thus
each fiber is a complete intersection defined by that sequence.

For a graph $G$, put $g=|E(G)|$ and $m=L(e-g)$. Compose $f$
with the coordinate projection $\mathcal Y\to\A_R^{Lg}$.
Both maps are faithfully flat, so their composition is faithfully
flat. To identify a geometric fiber, take any algebraically closed
field $k$ of characteristic different from two and a target
$c_G\in k^{Lg}$. The complete targets extending $c_G$ form
\[
 \mathcal B=\Spec A\simeq\A_k^m,\qquad A=k[z_1,\ldots,z_m].
\]
The graph fiber $Y_G=\Spec B$ is precisely
$\mathcal X\times_{\mathcal Y}\mathcal B$. Consequently $B$ is faithfully
flat over $A$, and every geometric fiber of $Y_G\to \mathcal B$ is integral
of dimension $d$, by the result for $f$.

Let $K=\operatorname{Frac}(A)$. For each nonzero $a\in A$, tensoring
the injection $A\xrightarrow{\,a\,}A$ with the flat $A$-module $B$
shows that multiplication by $a$ is injective on $B$. Localization
therefore embeds $B$ into $B\otimes_A K$. This generic-fiber ring
is a nonzero domain, since it is geometrically integral; hence
$B$ itself is a domain. Moreover,
\[
 \dim B=\operatorname{trdeg}_k\operatorname{Frac}(B)
       =m+\operatorname{trdeg}_K\operatorname{Frac}(B\otimes_AK)
       =m+d=L(N-g).
\]
Here faithful flatness also ensures that $A$ embeds in $B$.
The graph map over $k$ is flat, so the coordinate regular sequence
of $c_G$ pulls back to its $Lg$ displayed coefficient equations
in $k[V_0,\ldots,V_{L-1}]$. They form a regular sequence defining
$Y_G$. This proves geometric integrality,
the complete-intersection property, and the dimension for every
graph fiber. The argument includes $g=0$ and $g=e$, with a
polynomial ring in zero variables interpreted as $k$.
\end{proof}

\begin{corollary}\label{cor:positive}
If $M>F(h)$, every ordinary graph-Gram fiber in characteristic
zero has rational singularities.
\end{corollary}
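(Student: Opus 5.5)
The plan is to deduce rationality from the jet-scheme criterion of Mustaţă for local complete intersections. A variety $Y$ that is a local complete intersection of dimension $n$ has rational singularities exactly when every jet scheme $Y_{L-1}$ is irreducible, since each $Y_{L-1}$ is then of the expected dimension $Ln$. Fix a simple graph $G$ on $h$ vertices with $g$ edges, and a target $c\in\C^g$, and let $Y=\Phi_{M,G}^{-1}(c)$. Theorem~\ref{thm:coeff} with $L=1$, after base change from $\Z[1/2]$ to $\C$, shows that $Y$ is an integral complete intersection of dimension $N-g$. This is the characteristic-zero geometric fiber over the corresponding point of $\A^{g}_{\Z[1/2]}$, and is the form Mustaţă's criterion requires.

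The key step is to identify the jet scheme $Y_{L-1}$ with a fiber of the truncated coefficient map $\Phi^{[L]}_{M,G}$. An $A[t]/(t^L)$-point of $Y$ is a matrix $V(t)=\sum_{a<L}V_at^a$ whose graph inner products equal the constant $c$ modulo $t^L$. Equivalently, the coefficient vector of $\Phi^{[L]}_{M,G}(V_0,\dots,V_{L-1})$ equals $(c,0,\dots,0)$. This identification holds scheme-theoretically, because both sides represent the same functor. Hence $Y_{L-1}$ is a geometric fiber of $\Phi^{[L]}_{M,G}$ over $\C$. By Theorem~\ref{thm:coeff}, that fiber is integral of dimension $L(Mh-g)=L\dim Y$ for every $L\ge1$. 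In particular every jet scheme is irreducible, and Mustaţă's theorem gives that $Y$ has rational singularities.

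The main point needing care is that Theorem~\ref{thm:coeff} covers arbitrary targets, not just zero. The jet target $(c,0,\dots,0)$ has complex coordinates, so it lies over a possibly non-closed point of $\A^{Lg}_{\Z[1/2]}$. The theorem was stated for geometric fibers over all points, including generic ones, so this causes no problem. I also need that Mustaţă's criterion applies to a possibly singular irreducible lci variety that is not necessarily a cone; his theorem is local and holds for any such variety. Normality is not needed as a separate hypothesis, since Mustaţă's result gives it automatically.

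Finally, the case $g=0$ is trivial, since then $Y$ is affine space. For the conclusion about FRS maps, which the paper presumably treats separately, flatness follows from Theorem~\ref{thm:coeff} with $L=1$.
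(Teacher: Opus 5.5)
Your proposal is correct and follows the paper's own proof essentially step for step. You identify each jet scheme of the fiber over $c$ with the coefficient fiber of $\Phi^{[L]}_{M,G}$ over $(c,0,\ldots,0)$, use Theorem~\ref{thm:coeff} to see that these fibers and the ordinary fiber are integral complete intersections, and then conclude by Musta\c{t}\u{a}'s jet-scheme criterion.
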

\begin{proof}
Every jet scheme of the ordinary fiber over $c$ is the coefficient
fiber over $(c,0,\ldots,0)$. Theorem \ref{thm:coeff} makes all
these schemes integral. The ordinary fiber is an integral
complete intersection. Musta\c{t}\u{a}'s theorem
\cite[Theorem 0.1]{Mjets} therefore gives rational singularities.
Since a local complete intersection is Gorenstein, these are equivalently
canonical singularities \cite[discussion following Theorem 0.1]{Mjets}.
\end{proof}

\section{The exact ideal threshold}
\begin{lemma}[The threshold of a homogeneous ideal]\label{lem:cone}
Let $J\subset\C[x_1,\ldots,x_n]$ be a nonzero proper homogeneous
ideal. Then $\lct_{\A^n}(J)=\lct_0(J)$.
\end{lemma}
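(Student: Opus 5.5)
The plan is to prove the two inequalities $\lct_{\A^n}(J)\le\lct_0(J)$ and $\lct_0(J)\le\lct_{\A^n}(J)$ separately, using the $\C^\times$-scaling action $x\mapsto\lambda x$ that preserves $J$. Here $\lct_0(J)$ is the infimum of $A(E)/\operatorname{ord}_E(J)$ over divisors $E$ whose center contains the origin. The inequality $\lct_{\A^n}(J)\le\lct_0(J)$ is immediate, since the global infimum runs over a larger set of divisors. The content is the reverse: every divisor $E$ over $\A^n$ with $\operatorname{ord}_E(J)>0$ must be compared with some divisor centered at the origin that has ratio no larger.

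First I will argue semicontinuity. The function $p\mapsto\lct_p(J)$ is lower semicontinuous on the zero locus $Z(J)$, i.e.\ thresholds can only drop under specialization. Equivalently, the non-klt locus of the pair $(\A^n,c\cdot J)$ is closed for each $c$. This follows from a single log resolution: the global threshold equals $\min_p\lct_p(J)$, and $\{p:\lct_p(J)\le c\}$ is the union of the images of those exceptional and strict-transform divisors $E_i$ with $A(E_i)\le c\,\operatorname{ord}_{E_i}(J)$. Each such image is a closed set. Hence the global threshold is attained at some point $p\in Z(J)$, and the set of such points is closed.

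Next I will use homogeneity. Scaling by $\lambda\in\C^\times$ is an automorphism of $\A^n$ preserving $J$, so $\lct_{\lambda p}(J)=\lct_p(J)$. The closed set $T=\{p:\lct_p(J)=\lct_{\A^n}(J)\}$ is therefore stable under scaling. It is nonempty: $J$ is proper and homogeneous, so $Z(J)\ni 0$ is nonempty and the infimum is a minimum by the resolution argument. Given $p\in T$, the orbit closure $\overline{\C^\times p}$ lies in $T$ and contains $0$, because $\lambda p\to0$ as $\lambda\to0$. Thus $0\in T$, which gives $\lct_0(J)=\lct_{\A^n}(J)$.

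The step that needs care is matching the stated convention. The statement defines $\lct_Y$ as an infimum over divisors with positive order on $J$, with no reducedness assumed, while the local threshold $\lct_0$ should mean the infimum over divisors whose center contains $0$. I will check that a log resolution of the (possibly nonreduced) ideal $J$ computes both quantities, using the standard fact that the infimum is achieved on any log resolution. The closedness of $\{p:\lct_p\le c\}$ then holds exactly as stated. This technical identification is the main obstacle; the orbit argument itself is routine.
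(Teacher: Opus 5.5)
Your proposal is correct and follows essentially the paper's own route. A single log resolution shows that $\{x:\lct_x(J)\le c\}$ is a finite union of images $\pi(E_i)$, hence closed; this set is dilation-stable, so closedness along $\lambda\mapsto\lambda x$ puts the origin in it once it is nonempty, as it is for $c=\lct_{\A^n}(J)$. The other inequality holds because the local infimum runs over fewer divisors. The convention-matching you flag as the main obstacle is handled in the paper simply by citing Musta\c{t}\u{a}'s resolution formula for $\lct_x$, which holds for arbitrary, possibly nonreduced, ideals.
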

\begin{proof}
Choose a proper log resolution $\pi:\widetilde Y\to\A^n$ of $J$
that is an isomorphism over $\A^n\setminus V(J)$, and write
$J\mathcal O_{\widetilde Y}=\mathcal O_{\widetilde Y}(-\sum_i a_iE_i)$
and $K_{\widetilde Y/\A^n}=\sum_i k_iE_i$. The resolution formula
\cite[Proposition 1.5]{Mpairs} gives
\[
 \lct_x(J)=\min_{i:\,x\in\pi(E_i),\ a_i>0}\frac{k_i+1}{a_i},
\]
with the minimum of an empty set equal to infinity. Thus, for
each real $c\ge0$, the locus
\[
 B_c=\{x:\lct_x(J)\le c\}
     =\bigcup_{i:\,a_i>0,\ (k_i+1)/a_i\le c}\pi(E_i)
\]
is closed: it is a finite union of images of closed sets under
the proper map $\pi$. Homogeneity means that every dilation
$x\mapsto\lambda x$, $\lambda\in\C^*$, preserves $J$.
The local threshold is invariant under these isomorphisms, so
$B_c$ is invariant under dilation. If $B_c$ is nonempty, it
contains a closed point $x$, and closedness applied to
$\lambda\mapsto\lambda x$ on $\A^1$ implies $0\in B_c$.

The global threshold $c_*=\min_{a_i>0}(k_i+1)/a_i$ is finite
and attained on this resolution, so $B_{c_*}$ is nonempty.
It follows that $\lct_0(J)\le c_*$. The reverse inequality
follows because the local minimum uses a subset of the divisors
in the global minimum. This proves the assertion without any
reducedness or complete-intersection hypothesis on $J$.
\end{proof}

\begin{proof}[Proof of Theorem \ref{thm:lct}]
Set $\gamma=e-\max(0,b_h(M/2))$ as in
Proposition \ref{prop:count}. For each fixed $L$, apply
\eqref{eq:generalcount} to the zero target over every finite
extension of a finite field of odd characteristic. Lang--Weil
gives
\begin{equation}\label{eq:jetupper}
 \dim (X_{M,h})_{L-1}\le L(N-\gamma).
\end{equation}
Here the subscript is jet order. For each fixed $L$, take the
zero-target coefficient scheme over $\Z[1/2]$. Its fiber
dimensions lie between $0$ and $LN$. Each dimension level set is
locally constructible by \cite[Tag 05F9]{Stacks}, hence constructible
by \cite[Tag 054E]{Stacks} because the target is affine. The locus
where the dimension exceeds $L(N-\gamma)$ is a finite union of
such level sets, hence constructible. It contains no closed
point, by the finite-field bound. The Jacobson argument used
above makes it empty. In particular the bound holds over $\mathbb Q$
and, by invariance of dimension under field extension, over~$\C$. This argument is applied separately for every $L$; it
does not require an infinite intersection of constructible sets.

The jet formula for an arbitrary ideal in a smooth ambient
space \cite[Corollary 0.2]{Mpairs} is
\begin{equation}\label{eq:jetlct}
 \lct(I_{M,h})=N-
       \sup_{L\ge1}\frac{\dim(X_{M,h})_{L-1}}{L}.
\end{equation}
No complete-intersection or reducedness hypothesis is imposed
on $I_{M,h}$ in this formula.
Thus \eqref{eq:jetupper} gives $\lct(I_{M,h})\ge\gamma$.
The identity
\[
 e-\delta_h(r)=\binom{h-r+1}{2}
\]
shows that $\gamma$ is the right side of \eqref{eq:lct}.

For the reverse inequality, the zeroth jet in
\eqref{eq:jetlct} gives $\lct(I_{M,h})\le\operatorname{height}
I_{M,h}\le e$. We need only consider the case in which the
inner minimum in \eqref{eq:lct} is less than $e$.
Put $u=\lfloor M/2\rfloor$, the maximal isotropic dimension. To see that $u\le h-2$, observe that
\[
 e-\left(\binom{k+1}{2}+\frac{M(h-k)}2\right)
   =\delta_h(h-k)-\frac{M(h-k)}2.
\]
An inner value below $e$ therefore implies $M<F(h)$. For
$2\le r\le h$, one has $r+2h/r\ge r+2\ge4$, so $F(h)\le2h-3$.
It follows that $u\le h-2$.
The quadratic expression in $k$ is minimized at $k=u$
(also at $k=u-1$ when $M=2u$), and its value is
\begin{equation}\label{eq:tau}
 \tau=
 \begin{cases}
 uh-u(u-1)/2,&M=2u,\\
 ((2u+1)h-u^2)/2,&M=2u+1.
 \end{cases}
\end{equation}
Indeed, the successive difference is $k+1-M/2$. It is negative
for $k<u$ and positive for $k\ge u$ when $M$ is odd; when $M$ is
even, it vanishes at $k=u-1$, giving the two adjacent minimizers.

Let $S$ be the locally closed stratum of matrices whose columns
span a maximal totally isotropic $u$-plane. For even $M$ choose
either component of $\Gr(u,M)$; for $u=0$ take $S=\{0\}$.
The open spanning-frame bundle over this orthogonal Grassmannian
has dimension
\begin{equation}\label{eq:stratum}
 \dim S=
 \begin{cases}
 uh+u(u-1)/2,&M=2u,\\
 uh+u(u+1)/2,&M=2u+1.
 \end{cases}
\end{equation}
For even $M$ the first line says $\dim X_{M,h}\ge N-\tau$.
The zeroth jet in \eqref{eq:jetlct} then gives the required
upper bound.

Suppose $M=2u+1$, and write $r=h-u$. A relation among the
Jacobian rows at $V\in S$ is exactly a hollow symmetric $H$
with $VH=0$. If $K=\ker V$ has dimension $r$, the symmetric
matrices with image in $K$ form a space of dimension
$r(r+1)/2$. For example, after choosing a matrix $P$ whose
columns are a basis of $K$, they are precisely $PAP^t$ with
$A$ symmetric; this parametrization is injective. Requiring
the $h$ diagonal entries to vanish imposes at most $h$
linear conditions. Consequently
\[
 \operatorname{corank}\operatorname{Jac}\Phi_{M,h}|_V
       \ge\frac{r(r+1)}2-h=\binom r2-u=:\delta,
 \qquad
 \dim T_VX_{M,h}\ge N-e+\delta .
\]
The assumption $\tau<e$ implies
$\delta-r/2=e-\tau>0$. Since the first jet over $V$ is
its tangent space, the first jet over $S$ has dimension at least
\[
 \dim S+N-e+\delta
       =N+u^2=2(N-\tau).
\]
Taking $L=2$ in \eqref{eq:jetlct} gives
$\lct(I_{M,h})\le\tau$. This completes the equality.

Finally, $I_{M,h}$ is a nonzero proper homogeneous ideal.
Lemma~\ref{lem:cone} therefore identifies its global threshold
with its threshold at the origin.
\end{proof}

\section{Sharp rationality and the log-canonical boundary}\label{sec:sharpness}
\begin{lemma}[Consequences of integrality]\label{lem:integralCI}
If $X_{M,h}$ is integral, then $M\ge h$ and $X_{M,h}$ is a
complete intersection of dimension $N-e$. These conclusions apply,
in particular, if $X_{M,h}$ has rational singularities.
\end{lemma}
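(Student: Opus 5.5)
The approach is to find explicit large subvarieties of $X_{M,h}$ that would contradict irreducibility unless $X_{M,h}$ has the expected dimension. Every component has dimension at least $N-e$ by Krull's height theorem, so it suffices to show $\dim X_{M,h}\le N-e$. The complete-intersection property then follows as in the proof of Theorem~\ref{thm:coeff}: height $e$ with $e$ generators in a regular ring gives a regular sequence. Thus the plan is to show that integrality forces $M\ge h$, and that $M\ge h$ together with irreducibility forces dimension $N-e$.

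\emph{Step 1: the generic point.} If $X_{M,h}$ is integral, then it has a single component. I will exhibit a component containing the locus $X^\circ$ of matrices $V$ with $\operatorname{rank}V=\min(M,h)$ whose Gram matrix $V^tV$ is diagonal.

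For $M\ge h$, the open set of full column rank matrices is smooth of dimension exactly $N-e$. At such $V$, the differential $W\mapsto (V^tW+W^tV)_{i<j}$ is surjective by the argument in the proof of Theorem~\ref{thm:coeff}: choose $W$ with $V^tW=Q/2$. Its closure is therefore a component of dimension $N-e$.

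If there is only one component, every point of $X_{M,h}$ lies in this closure, so $\dim X_{M,h}=N-e$. It remains to see that this open set is nonempty, which holds by taking orthonormal columns. This settles the case $M\ge h$.

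\emph{Step 2: $M<h$ is impossible.} Suppose $M<h$. I will produce two distinct components, or a component of the wrong dimension, contradicting integrality.

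The first candidate is the locus where the first $h-1$ columns vanish and the last column is arbitrary. This set has dimension $M$ and lies in $X_{M,h}$.

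The second candidate is the closure of tuples of $M$ nonzero mutually orthogonal, non-isotropic columns, with the remaining $h-M$ columns set to zero. Its dimension is computed from the frame scheme of $M$ vectors in $M$-space.

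A cleaner route is to compare with the isotropic strata of Theorem~\ref{thm:lct}. Let $u=\lfloor M/2\rfloor$. The stratum $S$ of \eqref{eq:stratum} has dimension $uh+\binom{u}{2}$ or more. When $h>M$, this exceeds $N-e$, or else it differs from the non-isotropic component. So I would argue as follows.

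If $\dim X_{M,h}>N-e$, then $X_{M,h}$ is not the generic-rank closure. If instead the dimension equals $N-e$, then the closure of $S$ and the closure of the non-isotropic diagonal-Gram locus are two distinct irreducible closed sets, neither containing the other. This is checked by a rank or isotropy invariant: vectors in $S$ span an isotropic space, while vectors in the generic locus of the other set do not. Each has dimension at least $N-e$, so each is a component.

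\emph{Main obstacle.} The main obstacle is making Step 2 rigorous for all $M<h$. I would first try the simplest dimension count: show $\dim S> N-e$ when $M<h$, which would immediately contradict $\dim X_{M,h}=N-e$. Note that an integral complete intersection has pure dimension $N-e$, and any irreducible variety is pure.

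If that inequality fails for small $M$, I will instead use the two-components argument. Here I will locate a point of $X_{M,h}$ with a non-isotropic image in order to get a second component. Such points exist because vectors with Gram matrix $\operatorname{diag}(1,\dots,1,0,\dots)$ have a nonisotropic span. Once this is done, integrality excludes $M<h$, and Step 1 gives the complete-intersection assertion.

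Finally, rational singularities include normality and irreducibility at the cone point. Since the singularity is homogeneous, the variety is connected through $0$, so it is integral, and the second sentence of the lemma follows.
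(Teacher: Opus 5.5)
Your Step~1 (the case $M\ge h$, via a smooth point of full column rank) and your last paragraph (normality plus all components passing through the vertex gives integrality) match the paper. The gap is Step~2: you never prove that integrality excludes $M<h$, and the strategies you sketch fail as stated.

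\begin{itemize}
\item The equality $\dim X_{M,h}=N-e$ came from a point with independent columns, where the Jacobian has rank $e$. For $M<h$ you have no such input, so integrality imposes no dimension constraint. The branch $\dim X_{M,h}>N-e$ then gives no contradiction: the remark that $X_{M,h}$ ``is not the generic-rank closure'' produces no second component. When $2M<h-1$ one even has $N-e<0$, so this branch always occurs.
\item Appealing to the pure dimension $N-e$ of an integral complete intersection is circular, since you derived the complete-intersection property only for $M\ge h$.
\item In the other branch, the claim that $\overline S$ has dimension at least $N-e$ is false in general. For $(M,h)=(3,4)$ one has $\dim S=uh+u(u+1)/2=5<6=N-e$, so $\overline S$ is not a component. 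Your hoped-for inequality $\dim S>N-e$ for $M<h$ fails in exactly such cases.
\item A purely component-counting argument can never rule out an irreducible but nonreduced $X_{M,h}$, and integrality must exclude that possibility as well.
\end{itemize}

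The missing idea is algebraic and handles every $M<h$ at once, with no dimension bookkeeping. In the coordinate ring of $X_{M,h}$ the Gram matrix $V^tV$ is diagonal. So with $d_i=\sum_a v_{ai}^2$ one has $\prod_{i=1}^h d_i=\det(V^tV)$. When $M<h$ this determinant vanishes identically, since a Gram matrix of $h$ vectors in $M$-space has rank at most $M$. On the other hand, each $d_i$ is nonzero in the coordinate ring: it does not vanish at the point of $X_{M,h}$ whose $i$th column is the first coordinate vector and whose other columns are zero. Thus the ring has zero divisors and is not a domain. This is exactly how the paper obtains $M\ge h$, after which your Step~1 finishes the proof.
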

\begin{proof}
Put $d_i=\sum_{a=1}^M v_{ai}^2$ in the coordinate ring of $X_{M,h}$.
The off-diagonal Gram entries vanish in this ring, so
\[
 \prod_{i=1}^h d_i=\det(V^tV).
\]
If $M<h$, the right side is identically zero. However, each $d_i$
is nonzero in the coordinate ring: evaluate at the frame whose
$i$th column is the first coordinate vector and whose other columns
are zero. Thus the ring is not a domain, proving $M\ge h$.

For $M\ge h$, the frame $V_0$ consisting of the first $h$ coordinate
vectors belongs to $X_{M,h}$. At $V_0$ the differential of the
$(i,j)$th equation, for $i<j$, is $w_{ij}+w_{ji}$. These $e$
linear forms are independent, so $V_0$ is smooth of dimension
$N-e$. Integrality therefore forces $\dim X_{M,h}=N-e$.
Its ideal therefore has height $e$ and is generated by $e$ elements
in a polynomial ring, which makes it a complete intersection.

Finally, rational singularities imply normality. All irreducible
components of this affine cone contain its vertex. A normal scheme
has a local domain at that vertex, so it has only one component;
normality also implies reducedness. Hence it is integral.
\end{proof}

\begin{proof}[Proof of Theorem \ref{thm:rat}]
Corollary~\ref{cor:positive} gives rational singularities of every
geometric fiber when $M>F(h)$. Theorem~\ref{thm:coeff} at $L=1$
also gives flatness and reduced fibers, so the map is FRS, for the
complete graph and for every subgraph. Conversely, the FRS property
implies rational singularities of the zero fiber. It remains to prove
that rationality of the zero fiber forces $M>F(h)$. Suppose
that $X=X_{M,h}$ has rational singularities and $M\le F(h)$.
Lemma~\ref{lem:integralCI} gives $M\ge h$, integrality, and the
complete-intersection dimension $n=N-e$.

Set $u=\lfloor M/2\rfloor$. The inequality $F(h)\le2h-3$ follows
from $r+2h/r\ge r+2\ge4$ for $2\le r\le h$. Thus
$M\le2h-3$, and taking the integer part of $M/2$ gives $u\le h-2$.
The successive difference $k+1-M/2$ shows that the inner quadratic
minimum in \eqref{eq:lct} is attained at $k=u$. Its value is the
number $\tau$ in \eqref{eq:tau}. Directly from
$e-\delta_h(r)=\binom{h-r+1}{2}$,
\begin{equation}\label{eq:necessitytau}
 \tau=e-\max_{2\le r\le h}\{\delta_h(r)-Mr/2\}\le e,
\end{equation}
where the inequality is equivalent to $M\le F(h)$.
This is an arithmetic identity; it does not require a primeness
or normality threshold.

Let $S$ be the maximal-isotropic spanning-frame stratum from
\eqref{eq:stratum}, choosing either component when $M$ is even.
Its closure lies in the closed subset
\[
 Z=\{V\in X:V^tV=0\}.
\]
This is a proper subset, since the frame $V_0$ in
Lemma~\ref{lem:integralCI} has $V_0^tV_0=I_h$.

If $M=2u$, equations \eqref{eq:tau} and \eqref{eq:stratum} give
\[
 \dim S=uh+\frac{u(u-1)}2=N-\tau\ge N-e=n.
\]
This contradicts $S\subset Z\subsetneq X$, since a proper closed
subset of an integral $n$-dimensional variety has dimension less
than $n$.

Suppose instead that $M=2u+1$, and put $r=h-u$.
The Jacobian-relation calculation used in the proof of
Theorem~\ref{thm:lct} applies at every $V\in S$: symmetric matrices
with image in $\ker V$ form a space of dimension $r(r+1)/2$,
and imposing zero diagonal adds at most $h$ linear conditions.
Writing $\delta=\binom r2-u$, we obtain
$\dim T_VX\ge n+\delta$. Consequently,
\begin{align*}
 \dim\bigl((X)_1|_S\bigr)
 &\ge\dim S+n+\delta\\
 &=N+u^2=2(N-\tau)\ge2n.
\end{align*}
On the other hand, Musta\c{t}\u{a}'s criterion
\cite[Theorem 0.1]{Mjets} makes $(X)_1$ irreducible. Its open part
over the smooth locus of $X$ has dimension $2n$, hence so does
$(X)_1$. For the jet projection $p:(X)_1\to X$, the subset
$p^{-1}(Z)$ is closed and proper, and contains $(X)_1|_S$.
The displayed dimension bound is again a contradiction.
Both parity arguments include equality $M=F(h)$. Therefore
$M>F(h)$, as required. The complete graph gives necessity for
a condition uniform over all graphs and targets.
\end{proof}

\subsection{Comparison with the normal range}
The rationality proof above is independent of the prime-threshold
theorem. We now use \cite[Theorems 1--2]{CS} to identify the normal
complete-intersection range $M\ge\Dprime(h)$ and compare it with
the rationality and log-canonicity ranges.

We include the integer comparison to specify exactly where
normality and rationality differ. Put $T_k=k(k-1)/2$.

\begin{lemma}\label{lem:arithmetic}
For all $h\ge2$, $\Drat(h)$ is either $\Dprime(h)$ or
$\Dprime(h)+1$. The latter occurs exactly when
\begin{equation}\label{eq:exception}
 T_k+2\le h\le\lfloor k^2/2\rfloor
 \quad\text{for some } k\ge4 .
\end{equation}
\end{lemma}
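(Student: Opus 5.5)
The plan is to make both thresholds explicit as piecewise functions of $h$ on intervals between consecutive triangular numbers $T_k=k(k-1)/2$, and then compare them interval by interval.

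\textbf{Step 1: an explicit formula for $\Drat$.} By \eqref{eq:F},
\[
F(h)=2h+1-\min_{2\le r\le h}\left(r+\tfrac{2h}{r}\right).
\]
Consecutive terms of $g(r)=r+2h/r$ satisfy
\[
g(r+1)-g(r)=1-\frac{2h}{r(r+1)},
\]
so the minimizer is the integer $r=k$ with $T_k\le h\le T_{k+1}$, i.e.\ $k(k-1)\le 2h\le k(k+1)$. At $h=T_k$ both $r=k-1$ and $r=k$ minimize, and I would check that the two resulting formulas agree. I must also make sure the minimizer lies in $[2,h]$; this holds for $h\ge2$, with $h=2,3$ checked by hand. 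Since $\lfloor x-y\rfloor=x-\lceil y\rceil$ for integer $x$, this gives
\[
\Drat(h)=2h+2-k-\left\lceil\frac{2h}{k}\right\rceil,\qquad T_k\le h\le T_{k+1}.
\]
This formula holds whether or not $k\mid 2h$, because then $F(h)$ is an integer and $\lfloor F\rfloor+1=F+1$.

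\textbf{Step 2: evaluate $\Dprime$ on the same intervals.} On $T_k<h\le T_{k+1}$, write $h=T_k+s$ with $1\le s\le k$. Then $8h+1=(2k-1)^2+8s$ and $8h-7=(2k-1)^2+8(s-1)$.

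For the first term of \eqref{eq:Dprime}, I would locate $\sqrt{8h+1}$ and $\sqrt{8h-7}$ relative to the odd integers $2k-1$ and $2k+1$. This pins down $\lfloor(2h+1-\sqrt{8h+1})/2\rfloor$ as $h-k+1$ or $h-k$, depending only on whether $s$ is $0$, $k$, or strictly between. The second term is handled in the same way, with its boundary shifted by one.

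The result should be a closed form for $\Dprime(h)$ that is linear in $h$, with slope $2$ and offset $-2k+c$ for a small constant $c\in\{1,2,3\}$ depending on $s$ and on which term of the minimum wins.

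\textbf{Step 3: compare the two formulas.} Write $2h=k(k-1)+2s$, so that
\[
\left\lceil\frac{2h}{k}\right\rceil=k-1+\left\lceil\frac{2s}{k}\right\rceil.
\]
Hence
\[
\Drat(h)=2h+3-2k-\left\lceil\frac{2s}{k}\right\rceil,
\]
and $\lceil 2s/k\rceil\in\{1,2\}$ for $1\le s\le k$.

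Comparing with the Step 2 formula then gives $\Drat-\Dprime\in\{0,1\}$. The difference should equal $1$ exactly when $\lceil 2s/k\rceil=1$ and the parity-based second term of $\Dprime$ is not the active one. That translates to $2\le s\le\lfloor k/2\rfloor$, i.e.\ $T_k+2\le h\le T_k+\lfloor k/2\rfloor=\lfloor k^2/2\rfloor$. This range is nonempty only for $k\ge4$, which matches \eqref{eq:exception}.

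\textbf{Main obstacle.} I expect the hardest part to be the careful floor bookkeeping for the two square-root terms in $\Dprime$. In particular, I need to decide which of the two terms realizes the minimum at $s=1$ and at $s=k$, and to handle the overlap $h=T_k$ consistently between adjacent intervals. I would settle these boundary cases first, by exact comparison of squares (for example, $(2k-1)^2<8h+1<(2k+1)^2$). As a sanity check, I would verify the resulting formulas against Table~\ref{tab:thresholds}, for instance $h=8$ (where $k=4$ and $s=2$) and $h=12$ (where $k=5$ and $s=2$). I would also check the small cases $h\le 6$ directly.
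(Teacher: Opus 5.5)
Your strategy is the paper's, in a slightly cleaner parametrization. You take $h=T_k+s$ with $1\le s\le k$, so the paper's singleton $h=T_{k+1}+1$ becomes your case $s=1$, and $h=2$ needs no separate check. You also compute $\Drat(h)=2h+3-2k-\lceil 2s/k\rceil$ exactly, instead of bounding $F(h)-\Dprime(h)=(k^2-2h)/k\in[-1,1)$. Step~1 and the formula for $\Drat$ in Step~3 are correct.

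Step~2, however, carries the whole $\Dprime$ side of the comparison, and you only announce it; your prediction of its outcome is partly off. In fact $(2k-1)^2<8h+1\le(2k+1)^2$ for every $1\le s\le k$, so the first floor in \eqref{eq:Dprime} equals $h-k$ throughout. The value $s=k$ is not special, and $s=0$ lies outside your range. The second floor equals $h-k+1$ at $s=1$, where $\sqrt{8h-7}=2k-1$, and equals $h-k$ for $2\le s\le k$. Hence $\Dprime(h)=2h-2k+2$ at $s=1$, attained by the even first term. For $2\le s\le k$ one has $\Dprime(h)=2h-2k+1$, attained by the odd second term. This also settles your ``main obstacle'': only $s=1$ is exceptional, and at $s=k$ the second term is active as for every $s\ge2$.

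Consequently $\Drat(h)-\Dprime(h)=0$ at $s=1$, since $\lceil 2/k\rceil=1$ for $k\ge2$. For $s\ge2$ the difference is $2-\lceil 2s/k\rceil$. So it equals $1$ exactly when $\lceil 2s/k\rceil=1$ \emph{and} the second term is the active one. That is the reverse of your stated criterion. Read literally, your criterion selects only $s=1$, where the difference is $0$, and would contradict the lemma.

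Your final range is nonetheless the right one: $2\le s\le\lfloor k/2\rfloor$, i.e.\ $T_k+2\le h\le T_k+\lfloor k/2\rfloor=\lfloor k^2/2\rfloor$. With Step~2 filled in as above and the criterion corrected, the argument is complete.
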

\begin{proof}
The case $h=2$ is immediate. For $k\ge2$, comparison with
consecutive odd squares gives
\begin{align*}
 \left\lfloor\frac{2h+1-\sqrt{8h+1}}2\right\rfloor=h-k
 &\quad\Longleftrightarrow\quad T_k<h\le T_{k+1},\\
 \left\lfloor\frac{2h+1-\sqrt{8h-7}}2\right\rfloor=h-k
 &\quad\Longleftrightarrow\quad T_k+1<h\le T_{k+1}+1.
\end{align*}
Indeed, the respective square comparisons are
$(2k-1)^2<8h+1\le(2k+1)^2$ and
$(2k-1)^2<8h-7\le(2k+1)^2$.
On $T_k+2\le h\le T_{k+1}$ both floors are $h-k$; at
$h=T_{k+1}+1$ they are $h-k-1$ and $h-k$, respectively.
Taking the minimum in \eqref{eq:Dprime} yields
\[
\begin{array}{ll}
 \Dprime(h)=2h-2k+1,&T_k+2\le h\le T_{k+1},\\
 \Dprime(h)=2h-2k,&h=T_{k+1}+1.
\end{array}
\]
These intervals partition all $h\ge3$.
The successive difference of the expressions maximized in
\eqref{eq:F} is $-1+2h/(r(r+1))$.
On the first interval we may take the maximizing index $r=k$,
and obtain
\[
 F(h)-\Dprime(h)=\frac{k^2-2h}{k}\in[-1,1).
\]
Thus $\lfloor F(h)\rfloor+1$ exceeds $\Dprime(h)$ precisely
when $2h\le k^2$. On the one-point interval the maximizer can
be taken as $k+1$, and
$F(h)-\Dprime(h)=-2/(k+1)\in(-1,0)$.
The exceptional interval is nonempty precisely for $k\ge4$.
\end{proof}

\begin{corollary}\label{cor:lc}
In the normal range $M\ge\Dprime(h)$,
\[
 X_{M,h}\text{ is log canonical}\quad\Longleftrightarrow
       \quad M\ge F(h).
\]
The normal log-canonical but nonrational cases are precisely
\[
 h=2a^2,\qquad M=(2a-1)^2,\qquad a\ge2.
\]
\end{corollary}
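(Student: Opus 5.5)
The plan is to reduce log canonicity of the normal complete intersection $X_{M,h}$ to the equality $\lct(I_{M,h})=e$. We then read off that equality from Theorem~\ref{thm:lct}, and finally intersect with the rationality criterion of Theorem~\ref{thm:rat} using the case analysis of Lemma~\ref{lem:arithmetic}.

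\emph{Step 1 (reduction to the ideal threshold).} In the range $M\ge\Dprime(h)$, \cite[Theorems 1--2]{CS} make $X=X_{M,h}$ a normal complete intersection of codimension $e$ in $\A^N$. In particular $X$ is Gorenstein, so $K_X$ is Cartier and log canonicity is meaningful without boundary. For a normal local complete intersection of codimension $e$ in a smooth variety, $X$ is log canonical if and only if the pair $(\A^N,eX)$ is log canonical, i.e.\ $\lct(I_X)\ge e$. This is the inversion-of-adjunction statement of Ein--Musta\c{t}\u{a}, equivalently the jet criterion $\dim X_{L-1}\le L(N-e)$ for all $L$ via \eqref{eq:jetlct}. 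Since the zeroth jet always gives $\lct\le e$, log canonicity is equivalent to $\lct(I_{M,h})=e$. I expect the main point requiring care to be citing the correct form of this criterion for a possibly non-klt normal lci. If necessary I would argue via \eqref{eq:jetlct} and Musta\c{t}\u{a}'s jet characterization of log canonical lci varieties \cite{Mpairs}.

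\emph{Step 2 (arithmetic of \eqref{eq:lct}).} By Theorem~\ref{thm:lct}, $\lct=e$ if and only if every inner value satisfies $\binom{k+1}2+M(h-k)/2\ge e$ for $0\le k\le h-2$. By the identity $e-\bigl(\binom{k+1}2+M(h-k)/2\bigr)=\delta_h(h-k)-M(h-k)/2$ with $r=h-k$ ranging over $2,\dots,h$, this is exactly $Mr/2\ge\delta_h(r)$ for all $r$. That is, $M\ge\max_r 2\delta_h(r)/r=F(h)$, which proves the equivalence.

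\emph{Step 3 (the exceptional family).} Normal, log canonical and nonrational means $M\ge\Dprime(h)$, $M\ge F(h)$, and (by Theorem~\ref{thm:rat}) $M\le F(h)$. Hence $M=F(h)$ is an integer at least $\Dprime(h)$.
\begin{itemize}
\item If $h=2$ or $h$ lies on a one-point interval $h=T_{k+1}+1$, the proof of Lemma~\ref{lem:arithmetic} gives $F(h)-\Dprime(h)<0$, so there is no such $M$.
\item On an interval $T_k+2\le h\le T_{k+1}$ we have $F(h)-\Dprime(h)=(k^2-2h)/k\in[-1,1)$. The only integer value that is $\ge0$ is $0$, forcing $h=k^2/2$, hence $k=2a$ even, $h=2a^2$, and $M=\Dprime(h)=2h-2k+1=(2a-1)^2$.
\item The constraint $T_k+2\le 2a^2$ reads $a(2a-1)+2\le2a^2$, i.e.\ $a\ge2$, matching $k\ge4$. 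Also $2a^2\le T_{2a+1}$ holds trivially.
\end{itemize}
Conversely, for $h=2a^2$ and $M=(2a-1)^2$ with $a\ge2$, these computations show $M=\Dprime(h)=F(h)$. Hence $X_{M,h}$ is normal (CS), log canonical by Step~2, and not rational by Theorem~\ref{thm:rat}, since $M\not>F(h)$.
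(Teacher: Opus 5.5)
Your proposal is correct and follows essentially the same route as the paper. You reduce log canonicity of the normal complete intersection to $\lct(I_{M,h})=e$ via the Ein--Musta\c{t}\u{a} jet/inversion-of-adjunction criterion together with \eqref{eq:jetlct}, read off $M\ge F(h)$ from Theorem~\ref{thm:lct}, and then use the interval formulas for $F(h)-\Dprime(h)$ from the proof of Lemma~\ref{lem:arithmetic} to force $k^2=2h$, hence $h=2a^2$, $M=(2a-1)^2$, $a\ge2$. Your explicit check of the converse and of the constraint $T_k+2\le h$ fills in details that the paper's proof states only briefly.
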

\begin{proof}
Put $X=X_{M,h}$, a normal complete intersection of dimension $N-e$.
By \cite[Theorem 1.3]{EM}, $X$ is log canonical if and only if
$(X)_{L-1}$ is equidimensional for every $L\ge1$.
Its $Le$ coefficient equations in $\A^{LN}$ make every component
have dimension at least $L(N-e)$, while the principal component,
the closure of the jets over the smooth locus, has exactly that dimension.
Thus equidimensionality is equivalent to
$\dim(X)_{L-1}=L(N-e)$. By \eqref{eq:jetlct}, these equalities
for all $L$ are equivalent to $\lct(I_{M,h})=e$.
Theorem \ref{thm:lct} makes this equivalent to $M\ge F(h)$.
For log canonicity without rationality, Lemma
\ref{lem:arithmetic} forces an exceptional prime size and
$F(h)=M$. In its notation this means $k^2=2h$.
Hence $k=2a$, and the formulas follow. The exceptional
interval excludes $a=1$.
\end{proof}

\section{Jet dimensions and detection by the first jet}
\begin{corollary}\label{cor:evenjets}
Put $c=\lct(I_{M,h})$. For every $L\ge1$ such that $M$ or $L$
is even,
\[
 \dim(X_{M,h})_{L-1}=L(N-c).
\]
For all $M,h$,
\[
 c=N-\max\left\{\dim X_{M,h},\
                    \frac{\dim(X_{M,h})_1}{2}\right\}.
\]
\end{corollary}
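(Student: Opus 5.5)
The plan is to prove the two displayed formulas by matching an upper bound, which comes straight from \eqref{eq:jetlct}, with explicit lower bounds built from the strata used in the proof of Theorem~\ref{thm:lct}.

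\textbf{Upper bound.} By \eqref{eq:jetlct}, $c=N-\sup_{L}\dim(X_{M,h})_{L-1}/L$. Hence for every $L$ and every $M$,
\[
 \dim(X_{M,h})_{L-1}\le L(N-c).
\]
Consequently $N-\max\{\dim X_{M,h},\dim(X_{M,h})_1/2\}\ge c$, and the second formula needs only the reverse inequality. That reverse inequality is already contained in the proof of Theorem~\ref{thm:lct}. If $c=e$, the zeroth jet gives $\dim X_{M,h}\ge N-e$ by Krull, since $X_{M,h}$ is nonempty and cut out by $e$ equations. If $c=\tau<e$ and $M$ is even, that proof showed $\dim X_{M,h}\ge\dim S=N-\tau$. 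If $c=\tau<e$ and $M$ is odd, it showed $\dim(X_{M,h})_1\ge2(N-\tau)$. So the second formula follows by reading off these bounds. The real work is the first formula, namely the lower bound $\dim(X_{M,h})_{L-1}\ge L(N-c)$ for all $L$ when $M$ or $L$ is even.

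\textbf{Lower bound, easy cases.} If $c=e$, the jet scheme is nonempty (it contains the zero jet) and is cut out by $Le$ equations in $\A^{LN}$. Krull then gives dimension at least $L(N-e)$. If $c=\tau<e$ and $M$ is even, the stratum $S$ is smooth of dimension $N-\tau$ and lies in $X_{M,h}$. So $(S)_{L-1}\subset(X_{M,h})_{L-1}$, and $(S)_{L-1}$ has dimension $L(N-\tau)$ for every $L$.

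\textbf{Lower bound, the main case: $M=2u+1$ odd and $L=2m$ even.} Here I would construct a family of $L$-jets of the form
\[
 V(t)=A(t)+t^mB(t),
\]
with $A,B$ taken modulo $t^m$. I take $A(t)$ to be an $(m-1)$-jet of $S$, so that $A^tA=0$ exactly; these form a family of dimension $m\dim S$. The off-diagonal part of $V^tV$ is then
\[
 t^m\bigl(A^tB+B^tA\bigr)+t^{2m}B^tB.
\]
Modulo $t^{2m}$, only the first term survives, so the requirement is that the hollow part of $A^tB+B^tA$ vanish modulo $t^m$. This is an $R$-linear condition on $B$, where $R$ is the truncated power series ring $\C[t]/(t^m)$. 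It is the differential of $\Phi_{M,h}$ evaluated along the arc $A(t)$.

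Along an arc that stays inside $S$ I expect this differential to have constant rank $e-\delta$. If so, it is a constant-rank map of free $R$-modules, and its kernel is free of rank $N-e+\delta$. The fiber over each $A$ would then have dimension $m(N-e+\delta)$. The decomposition $V=A+t^mB$ is unique: $A$ is $V\bmod t^m$, and $B$ is the remaining part. So the total dimension is
\[
 m\bigl(\dim S+N-e+\delta\bigr)=m(N+u^2)=2m(N-\tau)=L(N-\tau),
\]
using the first-jet computation from the proof of Theorem~\ref{thm:lct}.

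I expect the main obstacle to be the constant-rank claim. The proof of Theorem~\ref{thm:lct} only gives corank $\ge\delta$ at points of $S$, and the rank may jump. To handle this, I would first restrict to a dense open subset $S^\circ\subset S$ on which the corank equals its generic value $\delta'\ge\delta$. Jets of the smooth variety $S^\circ$ still have dimension $m\dim S$. Over $S^\circ$ the linearized map has locally constant rank, so it splits locally over $R$ and its kernel is a free module of the expected rank. This yields the bound with $\delta'$ in place of $\delta$, and since $\delta'\ge\delta$ the bound $L(N-\tau)$ still follows. The dimension count is then the dimension of a constructible subset of $(X_{M,h})_{L-1}$, which is enough for a lower bound.
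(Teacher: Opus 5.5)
Your proposal is correct in substance. In the case $M$ odd, $L=2m$ even, it takes a different route from the paper, but one step is misstated. An $(m-1)$-jet $A$ of $S$ only satisfies $A^tA\equiv0\pmod{t^m}$, not ``exactly.'' So modulo $t^{2m}$ the expansion of $V^tV$ for $V=A+t^mB$ also contains $A^tA$, whose off-diagonal coefficients in degrees $m,\dots,2m-2$ need not vanish. As written, the condition on $B$ is then inhomogeneous and could a priori be unsolvable.

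The repair follows from smoothness of $S^\circ$. Lift $A$ to some $\hat A\in S^\circ(\C[t]/(t^{2m}))$ and write $V=\hat A+t^mB$ with $B$ taken modulo $t^m$. Then $\hat A^t\hat A\equiv0\pmod{t^{2m}}$, so $V^tV\equiv t^m(A^tB+B^tA)\pmod{t^{2m}}$. Hence the fiber of the truncation $(X_{M,h})_{2m-1}\to(X_{M,h})_{m-1}$ over each jet of $S^\circ$ is a translate of the kernel of your $\C[t]/(t^m)$-linear map $B\mapsto$ (hollow part of $A^tB+B^tA$). Your handling of that kernel via constant rank on $S^\circ$ (larger minors vanish on the reduced $S^\circ$, hence at every $\C[t]/(t^m)$-point) is sound. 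The count $m(\dim S+N-e+\delta')\ge L(N-\tau)$ then goes through.

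The paper avoids both the lifting and the rank question by an explicit family
$V(t)=[A(t)\mid A(t)Y(t)+t^az(t)w(t)]\bmod t^{2a}$:
\begin{itemize}
\item $A(t)$ runs over jets in a polynomial chart of isotropic $u$-frames, so isotropy holds identically;
\item $z$ represents the line $A^\perp/A$ with invertible norm;
\item the extra $ar=Lr/2$ dimensions come visibly from the free row $w$ modulo $t^a$.
\end{itemize}
Your route is instead the general Taylor-expansion principle: fibers of $(X_{M,h})_{2m-1}\to(X_{M,h})_{m-1}$ over liftable jets are torsors under the Jacobian kernel over $\C[t]/(t^m)$. It reuses the corank bound from Theorem~\ref{thm:lct} and gives, as a byproduct, $\dim(X_{M,h})_{2m-1}\ge m\dim\bigl((X_{M,h})_1|_{S^\circ}\bigr)$. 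It needs less explicit geometry of the orthogonal Grassmannian, at the cost of the generic-rank and lifting steps. Your easy cases and your derivation of the second formula coincide with the paper's.
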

\begin{proof}
The upper bound at every length is \eqref{eq:jetupper}.
If $c=e$, Krull's height theorem gives equality for every $L$.
Suppose $c<e$ and put $u=\lfloor M/2\rfloor$.
For $M=2u$, the smooth isotropic stratum $S$ in
\eqref{eq:stratum} has dimension $N-c$. Its length-$L$
jets have dimension $L(N-c)$ and give the lower bound.

For $M=2u+1$ and $L=2a$, take an algebraic chart of the smooth
space of independent ordered isotropic $u$-frames $A$.
It has dimension $(3u^2+u)/2$. The rank-one quotient
$A^\perp/A$ admits, locally, a representative $z$ whose norm
is an invertible regular function; it need not be normalized to $1$.
Put $r=h-u$. For arbitrary length-$L$ jets $A(t)$ in the
chart, arbitrary $u\times r$ matrices $Y(t)$ modulo $t^L$,
and arbitrary rows $w(t)$ of length $r$ modulo $t^a$, set
\[
 V(t)=\left[A(t)\ \middle|\ A(t)Y(t)+t^az(t)w(t)\right]
                          \pmod {t^{2a}}.
\]
All Gram equations vanish: the first $u$ columns are isotropic
and orthogonal to the other columns, and the remaining
inner products are divisible by $t^{2a}$.
The parameters are recovered from $V(t)$ by the first
$u$ columns and the independent coordinates $A(t),z(t)$.
Thus this is a locally closed family of dimension
\[
 L\left(\frac{3u^2+u}{2}+ur\right)+ar
   =L\left(uh+\frac{u(u+1)}2+\frac r2\right)
   =L(N-c).
\]
For $u=0$ the same construction is simply $V(t)=t^aw(t)$.
For an explicit chart, use the split form
$\langle(x,y,\zeta),(x',y',\zeta')\rangle=x^ty'+y^tx'+\zeta\zeta'$.
Writing $P\in\operatorname{GL}_u$, $\Omega ^t=-\Omega $, and $C$ a
$1\times u$ row, take
\[
 A=\begin{pmatrix}P\\(\Omega -C^tC/2)P\\CP\end{pmatrix},
 \qquad z=\begin{pmatrix}0\\-C^t\\1\end{pmatrix}.
\]
Direct multiplication verifies $A^tJA=0$, $A^tJz=0$,
and $z^tJz=1$, where $J$ is the matrix of the split form.
The parameters $P,\Omega ,C$ give precisely the dimension of
the isotropic-frame chart used above.
This proves the even-length statement. The threshold formula
using only orders zero and one follows by choosing $L=1$
in the even-$M$ case and $L=2$ in the odd-$M$ case, with
the case $c=e$ already covered.
\end{proof}

\begin{corollary}
If $M\ge\Dprime(h)$, then $X_{M,h}$ has rational
singularities if and only if its first jet scheme is
irreducible. It has log-canonical singularities if and
only if its first jet scheme is equidimensional.
\end{corollary}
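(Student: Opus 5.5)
In the normal range $M\ge\Dprime(h)$ the zero fiber $X=X_{M,h}$ is a normal complete intersection of dimension $n=N-e$, by \cite[Theorems 1--2]{CS}. The plan is to compare $\dim(X)_1$ with $2n$ and read off both criteria through the jet characterizations already used: Musta\c{t}\u{a}'s criterion \cite[Theorem 0.1]{Mjets} for rationality, and the equidimensionality criterion \cite[Theorem 1.3]{EM} for log canonicity. Both are stated for all jet orders; the point is that in this family the first jet already carries all the information.

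\emph{Log canonical case.}
\begin{itemize}
\item The $2e$ coefficient equations show that every component of $(X)_1$ has dimension at least $2n$.
\item The principal component, the closure of the first jets over the smooth locus, has dimension exactly $2n$. So $(X)_1$ is equidimensional if and only if $\dim(X)_1=2n$.
\item By Corollary~\ref{cor:evenjets}, $c=\lct(I_{M,h})=N-\max\{\dim X,\tfrac12\dim(X)_1\}$, and $\dim X=n$ here.
\item Hence $\dim(X)_1=2n$ if and only if $c=e$.
\item The proof of Corollary~\ref{cor:lc} shows $c=e$ is equivalent to log canonicity, since both amount to $\dim(X)_{L-1}=L(N-e)$ for all $L$.
\end{itemize}
This settles the second equivalence. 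Equidimensionality of $(X)_1$ implies $\dim(X)_1=2n$, which gives $c=e$, which gives log canonicity. The converse direction is part of \cite[Theorem 1.3]{EM}.

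\emph{Rational case, forward direction.} Musta\c{t}\u{a}'s theorem makes all jet schemes irreducible, so in particular $(X)_1$ is irreducible.

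\emph{Rational case, converse.} Assume $(X)_1$ is irreducible.
\begin{itemize}
\item Then $(X)_1$ equals its principal component, so it has dimension $2n$.
\item By the log canonical case, $X$ is log canonical, so $M\ge F(h)$ by Corollary~\ref{cor:lc}.
\item If $M>F(h)$, Theorem~\ref{thm:rat} gives rationality and we are done.
\item The remaining case is $M=F(h)$ inside the normal range. By Corollary~\ref{cor:lc} this means $h=2a^2$ and $M=(2a-1)^2$, which is odd.
\end{itemize}

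\emph{The boundary case, expected main obstacle.} Here we must show that $(X)_1$ is reducible, since dimension counting alone only gives $\dim(X)_1=2n$. I would reuse the first-jet construction from the necessity part of the proof of Theorem~\ref{thm:rat}:
\begin{itemize}
\item Put $u=\lfloor M/2\rfloor$. Then $M=F(h)$ gives $\tau=e$ in \eqref{eq:necessitytau}.
\item The first jets over the maximal-isotropic stratum $S$ have dimension at least $\dim S+n+\delta=2(N-\tau)=2n$.
\item These jets lie in $p^{-1}(Z)$, where $Z=\{V^tV=0\}$ is a proper closed subset of the integral variety $X$.
\item If $(X)_1$ were irreducible of dimension $2n$, the proper closed subset $p^{-1}(Z)$ would have dimension less than $2n$.
\end{itemize}
This is a contradiction. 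The care needed is in checking that the equality case $\tau=e$ still yields a family of dimension at least $2n$ rather than strictly more; the displayed bound gives exactly that. It is also where one uses that $X$ is integral, which holds because normality plus a cone vertex forces a single component.
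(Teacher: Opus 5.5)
Your argument is correct, but it is organized differently from the paper's. The paper proves both converses at once by contraposition. Lemma~\ref{lem:arithmetic} places any normal nonrational case at $M=\Dprime(h)=2h-2k+1$ in the exceptional family. There, the isotropic first-jet family from the proof of Theorem~\ref{thm:rat} has dimension at least $2\dim X_{M,h}+(k^2-2h)$. Excess zero (the log-canonical nonrational case) makes $(X)_1$ reducible. Positive excess (the non-log-canonical case) makes it non-equidimensional, because the principal component has dimension $2\dim X_{M,h}$.

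You instead obtain the log-canonical equivalence formally, by chaining four equivalent conditions:
\begin{itemize}
\item equidimensionality of $(X)_1$;
\item the equality $\dim(X)_1=2n$;
\item the equality $c=e$, via Corollary~\ref{cor:evenjets} with $\dim X=n$;
\item log canonicity, via the proof of Corollary~\ref{cor:lc}.
\end{itemize}
So you never need the explicit exceptional family or the excess $k^2-2h$. You then reduce the rational converse to the boundary $M=F(h)$ and rerun the equality case of the necessity argument.

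Both routes rest on the same isotropic first-jet family, which Corollary~\ref{cor:evenjets} packages into its threshold formula. The paper's version is shorter and records the size of the excess. Yours shows that the log-canonical criterion follows from the low-order jet threshold formula alone.

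Your detour through log canonicity in the rational case can also be dropped. The necessity part of the proof of Theorem~\ref{thm:rat} uses Musta\c{t}\u{a}'s criterion only to know that $(X)_1$ is irreducible. In the normal range $X$ is already integral of dimension $n$, so feeding irreducibility of $(X)_1$ directly into that argument forces $M>F(h)$.
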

\begin{proof}
Rationality and log canonicity give the stated jet properties
by \cite{Mjets,EM}. If $M<\Drat(h)$ in the normal range,
Lemma~\ref{lem:arithmetic} gives an exceptional size
$M=\Dprime(h)=2h-2k+1$. The isotropic jet family in the
proof of Theorem~\ref{thm:rat} then has dimension at least
$2\dim X_{M,h}+(k^2-2h)$. At excess zero the first jet is reducible; at
positive excess it is also not equidimensional, since its
principal component still has dimension $2\dim X_{M,h}$.
These are all remaining cases by Lemma \ref{lem:arithmetic}.
\end{proof}

\paragraph{An odd-length example.}
The bound can be strict when both $M$ and $L$ are odd. For
$M=1$, $h=4$, and $L=3$, Theorem~\ref{thm:lct} gives $c=2$,
so \eqref{eq:jetupper} bounds $\dim(X_{1,4})_2$ by $3(4-2)=6$.
In fact its dimension is $5$. Write its four coordinates as
$x_i(t)=a_i+b_it+c_it^2$ modulo $t^3$.
If some $a_i\ne0$, then $x_i(t)$ is a unit, and the equations
$x_i(t)x_j(t)=0$ force $x_j(t)=0$ for every $j\ne i$.
These four open strata have dimension $3$.
On the closed locus $a_1=\cdots=a_4=0$, the only remaining
conditions are $b_ib_j=0$ for $i<j$. The $b$-coordinates form
a union of four coordinate axes, while the four $c_i$ are free.
This closed locus has dimension $1+4=5$, proving the claim.

\section{Reproducible checks and further questions}
The proof rests on Lemmas \ref{lem:symmetric}--\ref{lem:gauss},
Fourier inversion, and the cited geometric theorems.
The four Python programs in the ancillary \texttt{anc/} directory
supply bounded checks
that can expose normalization or arithmetic errors:
\begin{itemize}
\item Exact threshold arithmetic was checked for $2\le h\le20{,}000$,
with an independent brute-force maximization for $h\le600$.
The minimization and isotropic-stratum arithmetic in
\eqref{eq:lct} were checked for $23{,}095$ pairs $(h,M)$.
The direct parity obstruction in the rationality proof was compared
with $M\le F(h)$ for $179{,}101$ pairs with $2\le h\le600$,
without using the prime-threshold formula in that comparison.
\item Exhaustive small-ring matrix enumeration includes all
$531{,}441$ hollow $4\times4$ matrices over $\Z/9$,
$355{,}384$ Schur-pivot checks, and selected independent
kernel counts verifying image lengths.
\item Direct vector enumeration and bit-set intersections give
the same counts as exact Fourier inversion for 22 prescribed
fibers in six cases, including both $\Z/9$ and
$\F_3[t]/(t^2)$. Zero, nonzero, and nonunit targets are included.
\item Four additional checks in odd ambient dimensions compare
first-jet counts obtained from literal Jacobian ranks with
signed Fourier inversion. They include $M=3,h=4$, whose
ideal threshold is $11/2$.
\item The standalone program \texttt{check\_valuation\_strata.py}
computes image lengths by enumerating kernels, without Smith
elimination. It checks all $3{,}048$ hollow matrices in twelve small
cases over $\Z/3^\ell$ and $\F_3[t]/(t^\ell)$, and compares
the counts of $280$ pivot fibers with \eqref{eq:hollowexact}.
It also checks the exact common-valuation decomposition,
\eqref{eq:productcount}, and the mixed-valuation example in
Remark~\ref{rem:mixedvaluation}.
\end{itemize}
The review PDF carries all four scripts, their JSON results, the
ancillary README and checksums as document attachments. It also embeds
\texttt{gram\_verification\_code.zip}: save this file from the PDF
reader's Attachments panel and extract it to obtain \texttt{anc/}.
The accompanying review archive
\texttt{sharp\_gram\_singularity\_thresholds\_review.zip} contains
the same PDF, \texttt{main.tex}, and the complete \texttt{anc/} directory.
The arXiv source archive supplies \texttt{main.tex} and \texttt{anc/};
its ordinary \LaTeX\ build produces the typeset pages without the
review PDF's document attachments. After extracting the code ZIP
or either complete archive, run these commands from its root with
Python 3.11 or later:
\begin{verbatim}
python -X utf8 anc/check_gram_fourier_arithmetic.py
python -X utf8 anc/check_direct_gram_counts.py
python -X utf8 anc/check_odd_gram_first_jets.py
python -X utf8 anc/check_valuation_strata.py
\end{verbatim}
No third-party Python packages are required. Each script writes a
JSON result beside itself, with \texttt{status: PASS} on successful
completion and a SHA-256 hash of its producing script. Keep the four
scripts together: the first-jet checker imports the ring implementation
from the direct-count checker. The supplied JSON files record the
finite cases and exact outputs; \texttt{anc/SHA256SUMS.txt} identifies
the distributed scripts and results. Reruns change elapsed times,
so numerical payloads should be compared after excluding the
\texttt{seconds} field. The ancillary README gives the scope of
each check. These computations supplement the symbolic proofs;
they do not establish the geometric transfer or the cone lemma.

The positive-characteristic question can be stated more precisely using
the prior results. Over an algebraically closed field of odd characteristic,
$F$-rationality of the zero-fiber ring requires $M\ge\Drat(h)$.
Indeed, $F$-rationality implies normality. All components of this
homogeneous scheme meet the origin, so normality forces integrality and
hence $M\ge\Dprime(h)$ by \cite[Theorem 2]{CS}. If
$\Drat(h)=\Dprime(h)+1$, Lemma~\ref{lem:arithmetic} places $h$ in exactly
the exceptional family for which \cite[Theorem 1.3]{Xie} excludes
$F$-rationality at $M=\Dprime(h)$ in every odd characteristic.
On the other hand, \cite[Theorem 3.2]{TV} supplies strong $F$-regularity
when $M\ge3h-4$. Thus the remaining question is whether
$M\ge\Drat(h)$ suffices, uniformly in odd characteristic, even for the
zero fiber. Theorem~\ref{thm:coeff} gives integral coefficient fibers in
that range, but the point counts and the local-field density bounds do
not establish the required Frobenius criteria.

A second question is a uniform formula for jet dimensions at odd
lengths in odd ambient dimension outside the log-canonical range.

\paragraph{Acknowledgment.}
The author thanks OpenAI's GPT-6 for assistance with exploratory
calculations, proof development, code checks, and manuscript drafting.
The author takes responsibility for the mathematical claims and the final
manuscript.

\end{document}